\documentclass[a4paper,11pt,reqno]{amsart}

\usepackage[latin1]{inputenc} 
\usepackage{graphics}
\usepackage{times} 
\usepackage{amsfonts} 
\usepackage{multicol} 
\usepackage{amssymb} 
\usepackage{amsmath}
\usepackage{amssymb} 
\usepackage{bbm} 
\usepackage{enumerate}
\usepackage{natbib}
\usepackage{graphicx}

\pagestyle{headings}

 \setlength{\textheight}{21.0cm}
 \setlength{\textwidth}{14.0cm}
 \advance\topmargin -0.5cm
 \advance\oddsidemargin -1.0cm
 \advance\evensidemargin -1.0cm

  {\end{enumerate}}

  {\end{enumerate}}

%\tilde{d}_{\min}}
%\tilde{d}_{\max}}

\def\rmd{\mathrm{d}}

\def\cp{\stackrel{P}{\longrightarrow}}

 %estimate of d
 %estimate of sigma
 %true d
 %between \td and \hd.

\def\1{\mathbbm{1}}

\def\bX{\mathbf{X}}

\def\bx{\mathbf{x}}

 %enTiers  naTurels

\def\Rset{\mathbb{R}} %reels
 %complexes
 %probabilité
\def\PE{\mathbb{E}} %esperance
\newcommand{\CPE}[3][]
{\ifthenelse{\equal{#1}{}}{\operatorname{E}\left[\left. #2 \, \right| #3 \right]}{\operatorname{E}_{#1}\left[\left. #2 \, \right | #3 \right]}}

\def\calS{\mathcal{S}}

\def\calB{\mathcal{B}}

\def\calF{\mathcal{F}}

\def\calD{\mathcal{D}}

\def\argmin{\mathop{\mathrm{Argmin}}}

 %esperance

%\theoremheaderfont{\scshape}
\newtheorem{lem}{Lemma} 
\newtheorem{prop}{Proposition} 

\newtheorem{theo}{Theorem}
\theoremstyle{definition}
 
\newtheorem{assumption}{Assumption} 
\theoremstyle{remark}
\newtheorem{remark}{Remark}

% IMPORTANT DEF

\def\bPhi{\boldsymbol{\Phi}}
\def\bphi{\boldsymbol{\phi}}
\def\bbeta{\boldsymbol{\beta}}
\def\hbbeta{\boldsymbol{\widehat{\beta}}}
\def\hbbetadif{\boldsymbol{\widehat{u}}}
\def\cpout{\stackrel{P^\ast}{\longrightarrow}}
\def\param{\mathbf{t}}
\def\doubleparam{\mathbf{u}}
\def\pencontrast{\Lambda_n}
\def\pencontrastnorm{\mathbb{L}_n}
\def\modpencontrastnorm{\mathcal{L}_n}
\def\pencontrastlim{\mathbb{L}}
\def\modpencontrastlim{\mathcal{L}}
\def\paramset{\mathrm{T}}
\def\wc{\rightsquigarrow}
\def\Xset{\mathcal{X}}

\newcommand{\sgn}{\operatorname{{\mathrm sgn}}}

\title{Weak convergence of the regularization path in penalized M-estimation}

\date{June 10, 2009 (first revision)}

\author{Jean-Fran\c cois GERMAIN}
\address{RENAULT DREAM-DTAA, Technocentre Guyancourt, 1, avenue du Golf, 78288 Guyancourt, France.}
\email{jean-francois.germain@renault.com}

\author{Fran\c cois Roueff}
\address{Institut TELECOM, TELECOM ParisTech, LTCI CNRS, 46, rue Barrault, 75634 Paris Cedex 13, France}
\email{roueff@telecom-paristech.fr}

\subjclass{Primary 62J07, 62F12, 60F17 Secondary: 62J05, 60F05, 62E20.}

\keywords{lasso, Akaike information criterion (AIC), penalized M-estimation, regularization path, weak convergence, pathwise argmin theorem.}

\begin{document}
\maketitle
\renewcommand{\thefootnote}{}
\footnote{\textit{Corresponding author}: F. Roueff,  Institut TELECOM, TELECOM ParisTech, LTCI CNRS.}
\renewcommand{\thefootnote}{\arabic{footnote}}

\begin{center}
{\it RENAULT DREAM-DTAA and Institut TELECOM, TELECOM ParisTech, LTCI CNRS}
\end{center}

\begin{abstract}
We consider an estimator $\hbbeta_n(\param)$ defined as the element 
$\bphi\in\bPhi$ minimizing a contrast process $\pencontrast(\bphi,\param)$ for each $\param$. 
We give some general results for deriving the weak convergence of $\sqrt{n}(\hbbeta_n-\bbeta)$ in the space of bounded functions,
where, for each $\param$, $\bbeta(\param)$ is the $\bphi\in\bPhi$ minimizing the limit of  $\pencontrast(\bphi,\param)$ as $n\to\infty$. 
These results are applied in the context of penalized M-estimation, that is, when
$\pencontrast(\bphi,\param)=M_n(\bphi)+\param J_n(\bphi)$, where $M_n$ is a usual contrast process and $J_n$ a penalty such
as the $\ell^1$ norm or the squared  $\ell^2$ norm.
The function $\hbbeta_n$ is then called a \emph{regularization path}.
For instance we show that the central limit theorem established 
for the lasso estimator in \cite{KNI00} continues to hold in a functional sense 
for the regularization path.
Other examples include various possible contrast processes for $M_n$ such as those considered in 
\cite{POL85}.
\end{abstract}

\section{Introduction}
\label{Introduction}

Let us consider a real-valued contrast process $\{M_n(\bphi),\;\bphi\in\bPhi\}$ based on an observed sample of size $n$ and  
a contrast function $M$ defined on the same parameter set $\bPhi$ and minimized at the point $\bbeta$.
A penalized estimator with penalty weight $\param\geq0$ is defined as the minimizer of the contrast process 
\begin{equation}
  \label{eq:DefPenContrast}
\pencontrast(\bphi,\param)=M_n(\bphi)+\param \; J_n(\bphi),\quad\bphi\in\bPhi\;,
\end{equation}
where $J_n$ is a non-negative function defined on $\bPhi$, not depending on the observations but possibly on $n$, mainly to
allow some appropriate normalization.   

The use of penalties is popular for ill-posed problems and model selection, among which the ridge
regression (see~\cite{hoerl70}) and the lasso  (see \cite{TIB96}) are emblematic examples.
In these two examples the contrast process $M_n$ is the least-square criterion and the penalty function $J_n$ is the
squared $\ell^2$ norm and the $\ell^1$ norm, respectively. 
Consistency and central limit theorems are established in \cite{KNI00} precisely in the case where $M_n$ is
the least-square criterion and $J_n$ is in a family of penalties including both the squared $\ell^2$ norm and the $\ell^1$
norm. They show that, when the penalty is properly normalized, the penalized mean square estimator 
is no longer asymptotically normal. Instead, its asymptotic distribution is given by the minimizer of
a penalized quadratic form depending on a Gaussian vector (see \textit{e.g.} \cite[Theorem~2]{KNI00}). Their asymptotic results
hold as the number $n$ of observations tends to infinity and for a fixed finite-dimensional model.  
Quite different results have been established when the dimension of the model increases with $n$, see \cite{GRE04,ZHA06,BUN07,BIC08} and the
references therein. These results provide interesting properties of the lasso for model selection or prediction purposes in the context of
sparse models. Although specific normalizations of the penalty (different from those required in~\cite{KNI00}) are prescribed
in these theoretical results, there exist numerous heuristic ways for choosing the penalty 
weight $\param$  in practice. The first step is to minimize $\pencontrast(\bphi,\param)$ in~(\ref{eq:DefPenContrast}) on
$\bphi\in\bPhi$ for a 
collection of non-negative weights $\param$, resulting in a collection of estimators $\hbbeta_n(\param)$, which is
called the \emph{regularization path} (or the \emph{solution path}). The Least Angle Regression (LAR) technique introduced by
\citeauthor{EFR04} in 
\cite{EFR04} provides, in most cases, the entire path,  computed with the complexity of a linear regression. 
In a second step, some criterion is used to select $\param$, see \textit{e.g.}
\cite{zou:hastie:tib:2007} where AIC and BIC procedures are proposed for the lasso. 
Because the whole path is used by the practitioner, we think that it is crucial to examine whether the convergence of
$\sqrt{n}(\hbbeta_n(\param)-\bbeta)$, established in \cite{KNI00} for one fixed $\param$, continues to hold in a 
functional sense and, if it is the case, to determine the limit distribution.
The goal of this paper is twofold. First we show that, under the same assumptions as in \cite{KNI00},
the convergence holds in the space of locally bounded functions. 
Second we extend this result to more general contrast processes $M_n$ such as generalized linear
models (GLM) or least amplitude deviation (LAD).  
A key result is a \emph{pathwise argmin theorem} which establishes the functional weak convergence of a path defined as the minimizer a
collection of contrast processes, see Theorem~\ref{theo:argmaxParam}.

For the moment let us give the asymptotic behavior of the lasso regularization path, which is the most simple application of
our results and which naturally extends~\cite{KNI00}. Consider the linear model 
\begin{equation}
		\label{eq:LinearModel}
y_k=\bx_k^T\bbeta+\varepsilon_k,\quad k=1,2,\dots
\end{equation}
where $\bbeta\in\Rset^p$ is an unknown parameter, $(y_k)$ is a sequence of real-valued observations, $(\bx_k)$ is
the sequence of regression vectors and $(\varepsilon_k)$ is a strong white noise  with variance $\sigma^2$.
For any $\param\geq 0$, the lasso estimator $\hbbeta_n(\param)$ 
minimizes the penalized contrast process $\Lambda_n(\bphi,\param)$ on $\bphi\in\Rset^p$, where 
\begin{equation}
		\label{eq:Contrastelasso}
		\Lambda_n(\bphi,\param)= \frac{1}{n}\sum_{k=1}^n(y_k-\bx_k^T\bphi)^2 + \param \lambda_n \sum_{i=1}^p|\phi_i| \; ,
\end{equation}
which is a specific form of (\ref{eq:DefPenContrast}). 
Denote $\bX_n=\left[\bx_1,...,\bx_n\right]^T$. We consider the following assumptions, for
consistency and central limit theorem, respectively. The assumptions are the same as in~\cite{KNI00}. 
\begin{assumption}
  \label{assump:Consistencelasso}\quad
\begin{enumerate}[(i)]
  \item\label{it:CondConsistencelasso1}
  $C_n=n^{-1} \bX_n^T\bX_n\to C$, where $C$ is a positive-definite matrix;
  \item\label{it:CondConsistencelasso2}
  $\lambda_n\to 0$.
\end{enumerate}
\end{assumption}
\begin{assumption}
  \label{assump:CLTlasso}\quad
\begin{enumerate}[(i)]
  \item\label{it:CondCLTlasso1} Assumption~\ref{assump:Consistencelasso}-(\ref{it:CondConsistencelasso1}) holds;
  \item\label{it:CondCLTlasso2} $\max_{1\leq k\leq n} \|\bx_k\|^2=o(n)$;
  \item\label{it:CondCLTlasso3} $\lambda_n=n^{-1/2}$.
\end{enumerate}
\end{assumption}
% Using the same assumptions on $\boldsymbol{X}_n$ and $\lambda_n$ as in \cite{KNI00},
% we show that the consistency and the CLT hold for the lasso regularization path. 
Assumptions \ref{assump:Consistencelasso}-(\ref{it:CondConsistencelasso1}) and 
\ref{assump:CLTlasso}-(\ref{it:CondCLTlasso2}) are the classical assumptions for the asymptotic behavior of least squares
estimators. The other assumptions provide the appropriate way of normalizing the $\ell^1$ penalty. 
\begin{theo}
\label{theo:lassoConsistence}
Under Assumption~\ref{assump:Consistencelasso}, $\hbbeta_n(\param)$ converges in probability to $\bbeta$ locally uniformly in
$\param\in\Rset_+$, that is 
\begin{equation}
  \label{eq:consistencelasso}
\hbbeta_n\cp \bbeta  \; \textrm{ in } \ell^\infty_o(\Rset_+,\Rset^p) \; ,  
\end{equation}
where $\ell^\infty_o(\Rset_+,\Rset^p)$ denotes the space of locally bounded $\Rset_+\to\Rset^p$ functions. 
\end{theo}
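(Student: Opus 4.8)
The plan is to adapt the Knight--Fu consistency argument of~\cite{KNI00}, making it uniform in $\param$ over compact intervals; the non-negativity of the penalty makes this last step essentially free. First I would expand the least-squares part of~\eqref{eq:Contrastelasso} around $\bbeta$: writing $y_k-\bx_k^T\bphi=\varepsilon_k-\bx_k^T(\bphi-\bbeta)$ and setting $G_n\eqdef n^{-1}\sum_{k=1}^n\varepsilon_k\bx_k$, one gets
\[
\Lambda_n(\bphi,\param)=\frac1n\sum_{k=1}^n\varepsilon_k^2+V_n(\bphi,\param),\qquad
V_n(\bphi,\param)\eqdef(\bphi-\bbeta)^TC_n(\bphi-\bbeta)-2G_n^T(\bphi-\bbeta)+\param\lambda_n\sum_{i=1}^p|\phi_i|\,.
\]
Since the first term does not depend on $\bphi$, the minimizer $\hbbeta_n(\param)$ of $\Lambda_n(\cdot,\param)$ also minimizes $V_n(\cdot,\param)$, and comparing it with the competitor $\bphi=\bbeta$ gives the basic inequality $V_n(\hbbeta_n(\param),\param)\le V_n(\bbeta,\param)=\param\lambda_n\sum_{i=1}^p|\beta_i|$. (Existence of $\hbbeta_n(\param)$ follows, for $n$ large, from the continuity and coercivity of $V_n(\cdot,\param)$ shown next; strict convexity even gives uniqueness.)

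Two elementary facts then suffice. \emph{(i)} $G_n\cp0$: since $(\varepsilon_k)$ is a white noise of variance $\sigma^2$, the cross terms vanish in expectation, so $\PE\|G_n\|^2=\sigma^2 n^{-2}\sum_{k=1}^n\|\bx_k\|^2=\sigma^2 n^{-1}\trace(C_n)$, which tends to $0$ because $\trace(C_n)\to\trace(C)$; Markov's inequality concludes. \emph{(ii)} A coercive lower bound, uniform in $\param$: since $C_n\to C$ with $C$ positive-definite, there are $c>0$ and $n_0$ with $(\bphi-\bbeta)^TC_n(\bphi-\bbeta)\ge c\|\bphi-\bbeta\|^2$ for all $n\ge n_0$ and all $\bphi\in\Rset^p$; combined with $\param\lambda_n\sum_i|\phi_i|\ge0$ this gives, for $n\ge n_0$, every $\bphi\in\Rset^p$ and every $\param\ge0$,
\[
V_n(\bphi,\param)\ \ge\ c\,\|\bphi-\bbeta\|^2-2\|G_n\|\,\|\bphi-\bbeta\|\,.
\]

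To finish, fix $T>0$. For $\param\in[0,T]$, evaluating~\emph{(ii)} at $\bphi=\hbbeta_n(\param)$ and invoking the basic inequality together with $\param\le T$ yields $c\,r_n(\param)^2-2\|G_n\|\,r_n(\param)\le T\lambda_n\sum_{i=1}^p|\beta_i|$, where $r_n(\param)\eqdef\|\hbbeta_n(\param)-\bbeta\|$; solving this quadratic inequality in $r_n(\param)$ gives a bound that no longer involves $\param$,
\[
\sup_{\param\in[0,T]}\|\hbbeta_n(\param)-\bbeta\|\ \le\ c^{-1}\Bigl(\|G_n\|+\sqrt{\|G_n\|^2+cT\lambda_n\sum\nolimits_{i=1}^p|\beta_i|}\Bigr)\,,
\]
whose right-hand side tends to $0$ in probability by~\emph{(i)} and $\lambda_n\to0$. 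Since $T>0$ is arbitrary and every compact subset of $\Rset_+$ lies in such an interval, this is exactly~\eqref{eq:consistencelasso}. I do not expect a real obstacle here: unlike the forthcoming central limit theorem, consistency needs neither the pathwise argmin Theorem~\ref{theo:argmaxParam} nor the uniform convergence of the whole contrast surface $V_n$, but only the coercivity estimate and the value at $\bbeta$ --- both already uniform in $\param$ because the penalty is non-negative. The one point demanding (routine) care is the measurability of $\sup_{\param\in[0,T]}r_n(\param)$, which follows from continuity of $\param\mapsto\hbbeta_n(\param)$ for $n\ge n_0$.
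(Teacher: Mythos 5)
Your argument is correct, but it is a genuinely different route from the paper's. The paper proves this theorem as an application of its general convex-consistency machinery: it verifies Assumption~\ref{assump:convex} via the same expansion of the least-squares part (its Eq.~(\ref{eq:DecompM_n}) is essentially your $V_n$ without the penalty), observes that $M_n(\bphi)-M_n(\bbeta)\cp(\bphi-\bbeta)^TC(\bphi-\bbeta)$, notes $J_n(\bbeta)=\lambda_n\|\bbeta\|_1\to0$, and then invokes Theorem~\ref{theo:ConvexConsistence} (in fact Assertion~(\ref{it:MStrictlyConvex}), since $C_n$ is eventually positive definite), which itself rests on a convexity lemma turning pointwise into locally uniform convergence and on Theorem~\ref{theo:uniform-consistency}. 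You instead exploit the exact quadratic structure of the least-squares contrast: the basic inequality $V_n(\hbbeta_n(\param),\param)\le\param\lambda_n\|\bbeta\|_1$, the eigenvalue lower bound on $C_n$, and the solution of the resulting quadratic inequality give an explicit bound on $\sup_{\param\in[0,T]}\|\hbbeta_n(\param)-\bbeta\|$ that is uniform in $\param$ by construction, with no appeal to the general theorems at all. What your approach buys is a short, self-contained proof with an explicit (rate-type) bound $c^{-1}\bigl(\|G_n\|+\sqrt{\|G_n\|^2+cT\lambda_n\|\bbeta\|_1}\bigr)$; what the paper's approach buys is generality (the same verification pattern covers GLM, LAD, and any convex contrast where only pointwise convergence to a strictly convex limit is available, with no exact quadratic expansion) plus the existence, uniqueness and continuity of the path delivered by Theorem~\ref{theo:ConvexConsistence}, which is what lets the paper state the convergence in ordinary rather than outer probability. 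On that last point your treatment is adequate: even without the continuity remark, your pointwise inequality shows the possibly non-measurable event $\{\sup_{\param\in[0,T]}\|\hbbeta_n(\param)-\bbeta\|>\epsilon\}$ is contained in a measurable event of vanishing probability, so outer-probability convergence is automatic, and the continuity (or the paper's Assertion~(\ref{it:MStrictlyConvex})) upgrades it to the measurable statement.
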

We now define the limit process of the lasso regularization path, appropriately centered and normalized.
Let $U\sim\mathcal{N}(0,\sigma^2 C)$. For any $\param\geq 0$, we define $\hbbetadif(\param)$ as the
point $\bphi\in\Rset^p$ which minimizes 
\begin{equation}
	\label{eq:ContrasteLimite}
	\pencontrastlim(\bphi,\param)=-2U^T\bphi + \bphi^T C \bphi + \param 
	\left[\sum_{j=1}^p \phi_j \sgn\left(\beta_j\right) \1_{\{\beta_j \neq 0\}} + |\phi_j| \1_{\{\beta_j = 0\}} \right] \; .
\end{equation}
It is easy to show that this defines $\hbbetadif(\param)$ uniquely for all $\param\geq 0$ (see the proof of
Theorem~\ref{theo:lassoCLT}). The distribution of $\hbbetadif$ as a function is not explicit but is not more complicated than
its marginal distributions already described in~\cite{KNI00}, since the whole path is described as a deterministic function
of the random variable (r.v.) $U$. An interesting property  of $\hbbetadif(\param)$ is that, with probability 1, 
the set of its components
that vanish for $\param$ large enough is given by the set of zero components of the true parameter $\bbeta$. 

\begin{theo}
\label{theo:lassoCLT}
Under Assumption~\ref{assump:CLTlasso},
\begin{equation}
  \label{eq:lassoCLTStochDiff}
\sqrt{n}(\hbbeta_n-\bbeta)\wc\hbbetadif  \; \textrm{ in } \ell^\infty_o(\Rset_+,\Rset^p) \; ,
\end{equation}
where $\wc$ denotes the weak convergence.
\end{theo}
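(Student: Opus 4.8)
The plan is to realize $\sqrt n(\hbbeta_n-\bbeta)$ as the argmin path of a sequence of convex contrast processes and to invoke the pathwise argmin theorem (Theorem~\ref{theo:argmaxParam}). First I would recenter and rescale: for $\doubleparam\in\Rset^p$ and $\param\ge0$ set
\[
Z_n(\doubleparam,\param)=n\left[\pencontrast\Big(\bbeta+\tfrac{\doubleparam}{\sqrt n},\param\Big)-\pencontrast\big(\bbeta,\param\big)\right],
\]
with $\pencontrast=\Lambda_n$ as in~\eqref{eq:Contrastelasso}, so that $\sqrt n(\hbbeta_n(\param)-\bbeta)=\argmin_{\doubleparam}Z_n(\doubleparam,\param)$. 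Expanding the least-squares part and using $\lambda_n=n^{-1/2}$ gives
\[
Z_n(\doubleparam,\param)=-2\,\doubleparam^T W_n+\doubleparam^T C_n\doubleparam+\param\sum_{j=1}^p\sqrt n\Big(\big|\beta_j+\tfrac{u_j}{\sqrt n}\big|-|\beta_j|\Big),
\]
where $W_n=n^{-1/2}\sum_{k=1}^n\bx_k\varepsilon_k$ and $C_n=n^{-1}\bX_n^T\bX_n$. For each fixed $\doubleparam$ the map $\param\mapsto Z_n(\doubleparam,\param)$ is affine with the sole random ingredient $W_n$; its slope is deterministic and, since $\bbeta$ is a fixed vector, converges as $n\to\infty$ to $\sum_j\big(u_j\sgn(\beta_j)\1_{\{\beta_j\neq0\}}+|u_j|\1_{\{\beta_j=0\}}\big)$ (the two regimes $\beta_j\neq0$ and $\beta_j=0$ are separated once $n$ is large). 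Moreover $\doubleparam\mapsto Z_n(\doubleparam,\param)$ is convex, indeed strictly convex for $n$ large since $C_n\to C$ with $C$ positive definite, so $\hbbeta_n(\param)$ is eventually well defined and unique for every $\param$.

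Second, I would establish the finite-dimensional convergence $Z_n\cfidi\pencontrastlim$, with $\pencontrastlim$ as in~\eqref{eq:ContrasteLimite} (after identifying $\bphi$ with $\doubleparam$). This reduces to $W_n\cl U\sim\calN(0,\sigma^2 C)$: the covariance is $\PVar(W_n)=\sigma^2 C_n\to\sigma^2 C$ by Assumption~\ref{assump:CLTlasso}-(\ref{it:CondCLTlasso1}), and the Lindeberg condition for the triangular array $(\bx_k\varepsilon_k/\sqrt n)_{1\le k\le n}$ follows from $\max_{k\le n}\|\bx_k\|^2=o(n)$ (Assumption~\ref{assump:CLTlasso}-(\ref{it:CondCLTlasso2})) together with $\PE[\varepsilon_1^2]=\sigma^2<\infty$. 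Combining $W_n\cl U$ with the deterministic convergences $C_n\to C$ and of the penalty slopes, the continuous mapping theorem gives joint convergence of $\big(Z_n(\doubleparam_1,\param_1),\dots,Z_n(\doubleparam_m,\param_m)\big)$ to $\big(\pencontrastlim(\doubleparam_1,\param_1),\dots,\pencontrastlim(\doubleparam_m,\param_m)\big)$ for all choices of $(\doubleparam_i,\param_i)$; this in particular recovers \cite[Theorem~2]{KNI00} for each fixed $\param$.

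Third, I would verify the remaining hypotheses of Theorem~\ref{theo:argmaxParam}. The limit $\doubleparam\mapsto\pencontrastlim(\doubleparam,\param)$ is, with probability one, strictly convex and coercive because $C$ is positive definite, hence admits a unique minimizer $\hbbetadif(\param)$ for every $\param\ge0$ (this is the uniqueness asserted before the statement); the map $\param\mapsto\hbbetadif(\param)$ is continuous, and it is locally bounded because on any compact interval $[0,T]$ one has $\pencontrastlim(\doubleparam,\param)\ge-2\doubleparam^TU+\doubleparam^TC\doubleparam$ uniformly in $\param\ge0$, confining all minimizers to a fixed ball. Coercivity together with convexity upgrades the finite-dimensional convergence of the convex processes $Z_n$ to locally uniform convergence in $\doubleparam$, and Theorem~\ref{theo:argmaxParam} then yields $\sqrt n(\hbbeta_n-\bbeta)\wc\hbbetadif$ in $\ell^\infty_o(\Rset_+,\Rset^p)$, which is~\eqref{eq:lassoCLTStochDiff}. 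Theorem~\ref{theo:lassoConsistence} may be used, if needed, to localize the argmin path before applying the convexity argument.

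The main obstacle is the passage from the marginal convergence (essentially \cite{KNI00}) to the functional statement, i.e.\ controlling the oscillation of the random argmin path $\param\mapsto\sqrt n(\hbbeta_n(\param)-\bbeta)$ uniformly on each compact $[0,T]$. This is precisely what Theorem~\ref{theo:argmaxParam} is designed to deliver: the point is that $Z_n(\doubleparam,\param)$ is jointly convex in $\doubleparam$ and affine in $\param$, so that its minimizer depends on $\param$ through a one-Lipschitz family of subdifferential normalizations, and the equicontinuity in $\param$ of the argmin is then inherited from the uniform lower bound on the quadratic part (the smallest eigenvalue of $C_n$ bounded away from $0$), rather than requiring a separate stochastic equicontinuity estimate. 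Consequently the substantive work is to check that the hypotheses of Theorem~\ref{theo:argmaxParam} apply here verbatim — uniqueness and local boundedness of the limit argmin path, convexity of $Z_n$, and the finite-dimensional convergence established above — after which the conclusion is immediate.
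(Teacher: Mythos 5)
Your overall strategy (rescale the contrast to $Z_n(\doubleparam,\param)=n[\pencontrast(\bbeta+n^{-1/2}\doubleparam,\param)-\pencontrast(\bbeta,\param)]$, prove convergence of $Z_n$ to $\pencontrastlim$, and invoke the pathwise argmin theorem) is the same in spirit as the paper, which routes the argument through Theorem~\ref{theo:CLTStochDiff} with the explicit decomposition $G_n$, $H$, $R_n$. However, there is a genuine gap: Theorem~\ref{theo:argmaxParam} has five hypotheses, and you verify only~(\ref{it:LinftyConvContrast})--(\ref{it:limitTight}) and~(\ref{it:DefArgminFalCLT}); you never address Condition~(\ref{it:UnifTight}), the uniform-in-$\param$ asymptotic tightness of $\hbbetadif_n=\sqrt n(\hbbeta_n-\bbeta)$, i.e.\ the statement that $\sup_{\param\in[0,T]}\|\hbbeta_n(\param)-\bbeta\|=O_{P^*}(n^{-1/2})$. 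This is exactly the step the paper devotes Proposition~\ref{prop:vitesse} to (via the decomposition into a linear term $G_n$, a quadratic term $H$ and a remainder $R_n$), and it is not supplied by Theorem~\ref{theo:lassoConsistence}, which only localizes the path at rate $o_{P^*}(1)$. Your remark that the argmin's equicontinuity and tightness are ``inherited from the uniform lower bound on the quadratic part'' gestures at a convexity localization argument (uniform convergence of $Z_n$ on a large ball plus a uniformly well-separated limit minimum forces the argmin into that ball with high probability), which could indeed replace the rate proposition in this convex setting, but you neither state Condition~(\ref{it:UnifTight}) as something to be checked nor give that argument; as written, the appeal to Theorem~\ref{theo:argmaxParam} ``verbatim'' does not go through.

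A secondary inaccuracy: your claimed bound $\pencontrastlim(\doubleparam,\param)\geq -2\doubleparam^TU+\doubleparam^TC\doubleparam$ uniformly in $\param\geq0$ is false, because the limiting penalty $J_\infty^{(1)}$ contains the signed linear terms $\param\,\phi_j\sgn(\beta_j)$ for $\beta_j\neq0$ and can be negative. The correct coercivity, as in the paper, uses $\param\leq T$ to get $\pencontrastlim(\doubleparam,\param)\geq c_1\|\doubleparam\|^2+c_2\|\doubleparam\|$ with $c_1>0$ deterministic and $c_2$ a finite (possibly negative) random variable depending on $\|U\|$ and $T$; this still confines $\hbbetadif(\param)$ to a random ball uniformly over $\param\in[0,T]$, so the conclusion you draw survives, but the inequality you wrote does not. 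The finite-dimensional convergence step (Lindeberg--Feller for $W_n$, deterministic convergence of $C_n$ and of the penalty increments) matches the paper and is fine.
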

 
\begin{remark}
The convergence in $\ell^\infty_o(\Rset_+,\Rset^p)$ is equivalent to the uniform convergence on every compact subset of
$\Rset_+$. In fact the convergences~(\ref{eq:consistencelasso}) and~(\ref{eq:lassoCLTStochDiff}) cannot be improved in the
sense that they do not hold uniformly on $\Rset_+$. To see why, observe that, by the definition of
$\hbbetadif$, its coordinates corresponding to non-vanishing $\beta_j$ are unbounded as $\param\to\infty$.  
In contrast, the left-hand side of~(\ref{eq:lassoCLTStochDiff}) is bounded since, for any $n$, there is  
a large enough $\param$ for which $\hbbeta_n(\param)=0$. Note that this also  implies that 
$\sup_{\param\in\Rset_+}\|\hbbeta_n(\param)-\bbeta\|\geq\|\bbeta\|$, and thus that the
consistence~(\ref{eq:consistencelasso}) does not hold if $\ell^\infty_o(\Rset_+,\Rset^p)$ is replaced by the set of
bounded $\Rset_+\to\Rset^p$ functions $\ell^\infty(\Rset_+,\Rset^p)$ endowed with the sup norm. 
\end{remark}

The proofs of Theorem~\ref{theo:lassoConsistence} and Theorem~\ref{theo:lassoCLT} are applications of some general
results on the consistency of convex penalized M-estimators and on the weak convergence of Argmin's 
depending on an tuning parameter $\param$ (the so called pathwise argmin theorem in the following). More general penalized
contrasts will also be considered. Such extensions are of interest since the lasso regularization path has been
extended  to the case where $M_n$ is different from the least-square  criterion.
In \cite{PAR06}, a fast numerical algorithm is proposed for determining the regularization path when $M_n$ is
a regression function based on a negated log-likelihood of the  canonical exponential family.  
In \cite{GER07}, a fast algorithm based on a dichotomy  is proposed
to explore the range of $\param$'s in the specific case of logistic regression penalized by the $\ell^1$ norm.

The paper is organized as follows. In Section~\ref{sec:centr-limit-theor}, we provide a pathwise argmin theorem 
(Theorem~\ref{theo:argmaxParam}). Section~\ref{sec:penal-m-estim} is concerned with the asymptotic behavior of the
regularization  path of a penalized contrast. Very mild conditions on the contrast and on the penalty 
are provided for obtaining the uniform consistency and the central limit of the path and a particular attention is given to
the case where both the contrast and the penalty are convex. Except for the convex case, such results can actually be seen as
special cases of the more general study of pathwise M-estimators, which is treated in   
Section~\ref{sec:MestimCLT}. Finally we provide several examples of applications of these results in
Section~\ref{sec:OtherExamples}, including the $\ell^1$-penalized general linear model (GLM) introduced in~\cite{PAR06},
the penalized least absolute deviation (LAD) and the Akaike information criterion. 
Concluding remarks are provided in Section~\ref{sec:conclusion}. The detailed proofs are deferred to the appendix for
convenience.

\section{A pathwise argmin theorem}
\label{sec:centr-limit-theor}

To obtain a CLT for the regularization path, we rely on a  pathwise argmin theorem, which is of independent interest,
and can be seen as an extension of \cite[Theorem~2.7]{kim:pollard:1990} (see also
\cite[Theorem~3.2.2]{vandervaart:wellner:1996}) to fit the context of a path defined as the minimizer of a
collection of contrast processes. 

Let us recall some of the terminology and notation used in~\cite{vandervaart:wellner:1996}. 
For a metric space $\calD$, we say that a sequence of $\calD$-valued maps $(X_n)$ defined on $\Omega$ converges weakly to a
$\calD$-valued map $X$ defined on $(\Omega,\calF)$, and denote $X_n\wc X$, if $X$ is a Borel map and, for any real-valued
bounded continuous function $f$ defined on $\calD$,
$$
E^*[f(X_n)] \to E[f(X)]\;,
$$
where $E$ denotes the expectation with respect to $P$ and $E^*$ denotes the \emph{outer expectation}, defined for every real-valued
map $Z$ defined on $\Omega$ by $E^*[Z]=\inf\{E[U]~:~U\geq Z\}$, where, in this sup, the r.v. $U$ is taken measurable. The
\emph{inner expectation} and \emph{inner probability} are 
respectively defined by $E_*[Z]=-E^*[-Z]$ and $P_*(A)=1-P^*(A^c)$, where
$A^c$ denotes the complementary set of $A$ in $\Omega$.

For any positive integer $p$ and any set $\paramset$ we further denote by 
$\ell^\infty(\paramset,\Rset^p)$ the normed space of bounded functions $f=(f_1,\dots,f_p)$ taking values in $\Rset^p$ and defined on $\paramset$ endowed with
the sup norm on $\paramset$, denoted by 
$$
\|f\|_\paramset=\sup_{t\in \paramset, i\in\{1,\dots,p\}}|f_i(t)| \; .
$$
We will simply denote  $\ell^\infty(\paramset,\Rset^p)$ by $\ell^\infty(\paramset)$ for $p=1$.
\begin{theo}\label{theo:argmaxParam}
Let $\bPhi$ be a metric space endowed with a metric $d$ and $\paramset$ be an arbitrary set.   
We suppose that we are in one of the two following cases  
\begin{enumerate}[(C-1)]
\item\label{it:finite-dim-case}  $\paramset$ is a finite set. In this case, we set $\calD=\bPhi^\paramset$ endowed with the product
  topology; 
\item\label{it:LinftyConvinRp} $\bPhi=\Rset^p$ with $p\geq1$, $d$ being the Euclidean metric. In this case, we set
  $\calD=\ell^\infty(\paramset,\Rset^p)$. 
\end{enumerate}
Let $\{\pencontrastnorm(\bphi,\param),\;\bphi\in\bPhi,\param\in\paramset\}$ be a sequence of real-valued processes,
$\{\pencontrastlim(\bphi,\param),\;\bphi\in\bPhi,\param\in\paramset\}$ be a real-valued process,
 $\{\hbbetadif(\param),\;\param\in\paramset\}$ be a $\bPhi$-valued process,
and $\{\hbbetadif_n(\param),\;\param\in\paramset\}$ be a sequence of $\bPhi$-valued processes.
Assume that 
\begin{enumerate}[(i)]
\item\label{it:LinftyConvContrast} for any compact set $K\subset\bPhi$, $\pencontrastnorm\wc\pencontrastlim$ in
  $\ell^\infty(K\times\paramset)$ and $\pencontrastlim$ is a tight Borel map taking values in  $\ell^\infty(K\times\paramset)$;
\item\label{it:isolatedMinimumlimitCLT} for any $\eta>0$ and compact $K\subset\bPhi$, we have almost surely that
  \begin{equation}
    \label{eq:isolatedMinimumlimitCLT}
\inf_{\param\in\paramset}\left[\inf\{\pencontrastlim(\bphi,\param)~:~\bphi\in K,\;d(\bphi,\hbbetadif(\param))\geq\eta\}-\pencontrastlim(\hbbetadif(\param),\param)\right]
>0 \;;     
  \end{equation}
\item\label{it:limitTight} %$\hbbetadif$ is a tight Borel map in $\calD$.
for any $\epsilon>0$, there exists a compact $K\subset\bPhi$ such that
\begin{equation}
  \label{eq:Limtighfidi}
P\left(\hbbetadif(\param)\in K\text{ for all }\param \in\paramset\right) \geq 1-\epsilon \; ;
\end{equation}
\item\label{it:UnifTight} for any $\epsilon>0$, there exists a compact $K\subset\bPhi$ such that
\begin{equation}
  \label{eq:Uniftighfidi}
\liminf P_*\left(\hbbetadif_n(\param)\in K\text{ for all }\param \in\paramset\right) \geq 1-\epsilon \; ;
\end{equation}
\item   \label{it:DefArgminFalCLT} $\hbbetadif_n$ is approximately minimizing $\pencontrastnorm$,
\begin{equation}
  \label{eq:DefArgminFalCLT}
\sup_{\param\in\paramset}\left\{\pencontrastnorm(\hbbetadif_n(\param),\param)
-\inf_{\bphi\in\bPhi}\pencontrastnorm(\bphi,\param)\right\}_+ = o_{P^*}(1) \; .  
\end{equation}
\end{enumerate}
Then there is a version of $\hbbetadif$ in $\calD$ and $\hbbetadif_n\wc\hbbetadif$.
\end{theo}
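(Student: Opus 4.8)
This is a pathwise version of the classical argmin theorem (\cite[Theorem~2.7]{kim:pollard:1990}, \cite[Theorem~3.2.2]{vandervaart:wellner:1996}): the plan is to run that argument while carrying the tuning parameter $\param$ along and keeping every estimate uniform over $\param\in\paramset$ --- the infimum over $\param$ in~\eqref{eq:isolatedMinimumlimitCLT} being exactly what makes this uniformisation legitimate. The first step reduces $\bphi$ to a fixed compact set: given $\epsilon>0$, use~\eqref{eq:Limtighfidi} and~\eqref{eq:Uniftighfidi} to pick a compact $K=K_\epsilon\subset\bPhi$ with $P(\hbbetadif(\param)\in K\text{ for all }\param)>1-\epsilon$ and $\liminf_n P_*(\hbbetadif_n(\param)\in K\text{ for all }\param)>1-\epsilon$. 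On the first event,~\eqref{eq:isolatedMinimumlimitCLT} forces $\hbbetadif(\param)$ to be, for every $\param$, the unique minimiser of $\bphi\mapsto\pencontrastlim(\bphi,\param)$ over $K$, with a separation that does not degenerate as $\param$ ranges over $\paramset$; on the second,~\eqref{eq:DefArgminFalCLT} makes $\hbbetadif_n(\param)$ an $o_{P^*}(1)$-minimiser of $\bphi\mapsto\pencontrastnorm(\bphi,\param)$ over $K$ as well. One proves the conclusion in this restricted setting and recovers the general case by letting $\epsilon\downarrow0$ (that is, $K_\epsilon\uparrow\bPhi$) through a standard $3\epsilon$ argument on the laws of $\hbbetadif_n$; the weak limit obtained this way is automatically tight and Borel in $\calD$, which supplies the version of $\hbbetadif$ with sample paths in $\calD$ claimed in the conclusion.

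The engine is a \emph{deterministic stability estimate}, established once and valid uniformly in $\param$: let $z,z'$ be bounded real functions on $K\times\paramset$, let $\bphi_0:\paramset\to K$ be such that
\[
\delta(\eta):=\inf_{\param\in\paramset}\Bigl[\inf\{z(\bphi,\param):\bphi\in K,\ d(\bphi,\bphi_0(\param))\geq\eta\}-z(\bphi_0(\param),\param)\Bigr]>0\quad\text{for every }\eta>0,
\]
and let $\bphi_1:\paramset\to K$ satisfy $z'(\bphi_1(\param),\param)\leq\inf_{\bphi\in K}z'(\bphi,\param)+\alpha$ for all $\param$; then $\|z'-z\|_{K\times\paramset}\leq\alpha$ together with $3\alpha<\delta(\eta)$ imply $\sup_{\param\in\paramset}d(\bphi_1(\param),\bphi_0(\param))\leq\eta$. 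The proof is the familiar three-term sandwich and requires no regularity of $z'$ in $\bphi$, only the uniform separation of $z$ supplied by~\eqref{eq:isolatedMinimumlimitCLT}. I would apply it with $z=\pencontrastlim|_{K\times\paramset}$, $\bphi_0=\hbbetadif$, $z'=\pencontrastnorm|_{K\times\paramset}$ and $\bphi_1=\hbbetadif_n$; on the two events above it shows that $\hbbetadif_n$ is, in the supremum norm over $\paramset$, within any prescribed $\eta$ of the path $\param\mapsto\argmin_{\bphi\in K}\pencontrastlim(\bphi,\param)$ as soon as $\|\pencontrastnorm-\pencontrastlim\|_{K\times\paramset}$ and the slack in~\eqref{eq:DefArgminFalCLT} are small. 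Since only the weak convergence $\pencontrastnorm\wc\pencontrastlim$ in $\ell^\infty(K\times\paramset)$ --- with tight Borel limit --- is at hand, this smallness is brought in through the Skorokhod almost-sure representation~\cite[Theorem~1.10.3]{vandervaart:wellner:1996}.

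In the finite-parameter case, $\calD=\bPhi^\paramset$ is a finite product and tightness is free: $\hbbetadif_n$ lies in the compact set $K^\paramset$ with probability close to one, while $\pencontrastnorm|_{K\times\paramset}$ is asymptotically tight in $\ell^\infty(K\times\paramset)$. Along any subsequence one can then extract a further subsequence along which the pair $(\hbbetadif_n,\pencontrastnorm|_{K\times\paramset})$ converges weakly; the stability estimate, applied through the Skorokhod representation of this pair, identifies the first marginal of the limit as $\param\mapsto\argmin_{\bphi\in K}\pencontrastlim(\bphi,\param)$, which on the relevant event is $\hbbetadif$, so the limit is subsequence-independent and $\hbbetadif_n\wc\hbbetadif$. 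In the case $\bPhi=\Rset^p$, I would use the characterisation of weak convergence in $\ell^\infty(\paramset,\Rset^p)$ as convergence of all finite-dimensional marginals together with asymptotic tightness~\cite[Theorem~1.5.4]{vandervaart:wellner:1996}. The marginals are the finite-parameter case applied to each finite $\paramset_0\subset\paramset$ (all hypotheses restrict verbatim, the restriction $\ell^\infty(K\times\paramset)\to\ell^\infty(K\times\paramset_0)$ being continuous); asymptotic tightness is the point where the finite-case argument fails, because $K^\paramset$ is not compact in the uniform topology.

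The main obstacle is therefore the asymptotic tightness of $\{\hbbetadif_n\}$ in $\ell^\infty(\paramset,\Rset^p)$. As $\paramset$ carries no topology, one must (by~\cite[Theorems~1.5.4 and 1.5.7]{vandervaart:wellner:1996}) produce a semimetric $\rho$ on $\paramset$ with $(\paramset,\rho)$ totally bounded and $\hbbetadif_n$ asymptotically $\rho$-equicontinuous, that is $\lim_{\gamma\downarrow0}\limsup_n P^*\bigl(\sup_{\rho(s,t)<\gamma}\|\hbbetadif_n(s)-\hbbetadif_n(t)\|>\eta\bigr)=0$ for every $\eta>0$, the marginal tightness coming from~\eqref{eq:Uniftighfidi}. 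Such a $\rho$ has to be manufactured from the abstract tightness of $\pencontrastlim$ in $\ell^\infty(K\times\paramset)$ (which yields a totally bounded semimetric on $K\times\paramset$ and, with high probability, a uniform modulus of continuity for $\pencontrastlim$); the $\rho$-equicontinuity of $\hbbetadif_n$ then follows from the stability estimate once one knows that $\pencontrastnorm(\cdot,s)$ and $\pencontrastnorm(\cdot,t)$ are uniformly close over $K$ when $\rho(s,t)$ is small --- two $o_{P^*}(1)$-minimisers of contrasts both uniformly close to the same uniformly-separated limiting contrast being forced to be close in the sup norm over $\paramset$. Carrying this transfer out while keeping $(\paramset,\rho)$ totally bounded is the delicate part; the residual measurability issues (the argmin operation and the relevant subsets of the non-separable space $\ell^\infty(K\times\paramset)$) are dealt with in the usual way by restricting to the separable support of the tight limit $\pencontrastlim$.
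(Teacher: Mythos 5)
Your overall architecture coincides with the paper's: treat the finite-$\paramset$ case first (your Prohorov-plus-Skorokhod re-derivation is just an unpacking of what the paper gets by applying \cite[Theorem~3.2.2]{vandervaart:wellner:1996} to the auxiliary contrast $\modpencontrastnorm(\mathbf{v})=\sup_{\param}\{\pencontrastnorm(\mathbf{v}(\param),\param)-\inf_{\bphi}\pencontrastnorm(\bphi,\param)\}$ on $\bPhi^\paramset$), then in case (C-\ref{it:LinftyConvinRp}) reduce to finite-dimensional convergence plus asymptotic tightness of $(\hbbetadif_n)$, the latter obtained by building a semimetric on $\paramset$ out of the asymptotic tightness of $\pencontrastnorm$ and transferring equicontinuity to $\hbbetadif_n$ through a three-term sandwich; your deterministic stability estimate is exactly the mechanism driving the paper's Step~2. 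So the strategy is right, and the finite case and the reduction via \cite[Theorems~1.5.4 and~1.5.7]{vandervaart:wellner:1996} are in order.

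The gap is in the execution of the tightness step, which is precisely the part you defer as ``the delicate part'' and which is the theorem's main new content. Your plan is to apply the stability lemma with $z=\pencontrastlim$, $z'=\pencontrastnorm$ and to import the smallness of $\|\pencontrastnorm-\pencontrastlim\|_{K\times\paramset}$ by Skorokhod representation; but this cannot be run as stated, because $\pencontrastlim$, $\hbbetadif$ and $\pencontrastnorm$ are not coupled on the original space, and on a representation space for $\pencontrastnorm$ alone the processes $\hbbetadif_n$ are no longer available, while the equicontinuity event $\sup_{\tilde\rho(s,t)<\delta}\|\hbbetadif_n(s)-\hbbetadif_n(t)\|>\eta$ is not a functional of finitely many marginals, so the joint-pair trick you use in the finite case is exactly what is unavailable here (tightness of $\hbbetadif_n$ in $\ell^\infty(\paramset,\Rset^p)$ is what is being proved). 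What is needed, and what the paper does, is a purely distributional detour: on the high-probability events where $\hbbetadif_n$ stays in $K$ (Condition~(\ref{it:UnifTight})), is an $\alpha$-minimizer (Condition~(\ref{it:DefArgminFalCLT})) and $\pencontrastnorm$ is $\rho$-equicontinuous, the bad event is shown to be contained in an event formulated in terms of $\pencontrastnorm$ only --- for some $\param$ there exist two points of $K$ at distance $>\eta$ both within $4\alpha$ of $\inf_K\pencontrastnorm(\cdot,\param)$ --- whose $\limsup$ outer probability is bounded by the analogous probability for $\pencontrastlim$ via the continuous mapping theorem, and that probability is then made small using the a.s. separation~(\ref{eq:isolatedMinimumlimitCLT}) together with Condition~(\ref{it:limitTight}). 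This event-inclusion (a two-near-minimizer variant of your sandwich applied to the single limiting contrast), together with the verification that $(\paramset,\tilde\rho)$ with $\tilde\rho(s,t)=\sup_{\bphi}\rho((\bphi,s),(\bphi,t))$ is totally bounded so that \cite[Theorem~1.5.7]{vandervaart:wellner:1996} applies, is the missing step; without it your proposal proves the finite case but not the $\ell^\infty(\paramset,\Rset^p)$ statement.
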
 
% \begin{remark}
% Condition~(\ref{it:isolatedMinimumlimitCLT}) is satisfied if, almost surely, $\hbbetadif(\param)$ is the unique maximum of 
% $\pencontrastlim(\cdot,\param)$ for all $\param\in\paramset$ and $\{\pencontrastlim(\cdot,\param),\,\param\in\paramset\}$ 
% is equi-lower semicontinuous. The latter means that for every $\epsilon>0$ and $\bphi_0\in\bPhi$, there exists $\eta>0$ such that 
% $\pencontrastlim(\bphi,\param)\geq \pencontrastlim(\bphi_0,\param)-\epsilon$ for all $\bphi$ such that
% $d(\bphi,\bphi_0)\leq\eta$ and all $\param\in\paramset$.
%  Theorem~3.2.2 in~\cite{vandervaart:wellner:1996},
% \end{remark}
\begin{proof}[Proof of Theorem~\ref{theo:argmaxParam} in the case~(C-\ref{it:finite-dim-case})]
In the case~(C-\ref{it:finite-dim-case}), where $\paramset$ is finite, for any compact $K\subset\bPhi$, 
$K^\paramset$ is a compact subset of $\bPhi^\paramset$ endowed with the metric
$$
d_\paramset(\mathbf{u},\mathbf{v})=\sup_{\param\in \paramset}d(\mathbf{u}(\param),\mathbf{v}(\param))\;.
$$
Hence, in this case, Conditions~(\ref{it:limitTight}) and~(\ref{it:UnifTight}) respectively say that $\hbbetadif$ is tight
and $(\hbbetadif_n)$ is uniformly tight in $\bPhi^\paramset$.  
In this case, the conclusion of Theorem~\ref{theo:argmaxParam} follows almost directly from Theorem~3.2.2
in~\cite{vandervaart:wellner:1996}. To see why, let us introduce the following contrast process 
\begin{equation}
  \label{eq:modif_contrast}
  \modpencontrastnorm(\mathbf{v}) = \sup_{\param\in\paramset}\left\{\pencontrastnorm(\mathbf{v}(\param),\param)
-\inf_{\bphi\in\bPhi}\pencontrastnorm(\bphi,\param)\right\},\quad\mathbf{v}\in\bPhi^\paramset \; .
\end{equation}
Observe that defining $\hbbetadif_n(\param)$ as a minimizer of $\pencontrastnorm(\cdot,\param)$ for all $\param\in\paramset$
is equivalent to defining $\hbbetadif_n$ directly as a minimizer of $\modpencontrastnorm$.
In particular Condition~(\ref{it:DefArgminFalCLT}) implies that 
$$
\modpencontrastnorm(\hbbetadif_n)\leq \inf_{\mathbf{v}\in\bPhi^\paramset}\modpencontrastnorm(\mathbf{v}) + o_{P^*}(1) \;,
$$
that is, $\hbbetadif_n$ is a near minimizer of $\modpencontrastnorm$.
Condition~(\ref{it:LinftyConvContrast}), in turn, by the continuous mapping theorem, 
implies that $\modpencontrastnorm \wc \modpencontrastlim$ in $\ell^\infty(\bPhi^\paramset)$, where, for any
$\mathbf{v}\in\bPhi^\paramset$, 
$$
\modpencontrastlim(\mathbf{v})=\sup_{\param\in\paramset}\left\{\pencontrastnorm(\mathbf{v}(\param),\param)
-\inf_{\bphi\in\bPhi}\pencontrastnorm(\bphi,\param)\right\}\;.
$$
Finally it is not too difficult to show that Condition~(\ref{it:isolatedMinimumlimitCLT}) implies 
that, almost surely, for all compact $K\subset\bPhi$ and $\eta>0$,
$$
\inf\left\{\modpencontrastlim(\mathbf{v})~:~\mathbf{v}\in K^\paramset,\,d_\paramset(\mathbf{v},\hbbetadif)\geq\eta\right\} >
0=\modpencontrastlim(\hbbetadif)\;.
$$
This condition corresponds to the semicontinuity and argmax uniqueness conditions appearing in Theorem~3.2.2
in~\cite{vandervaart:wellner:1996}. Hence this theorem applies and yields $\hbbetadif_n\wc\hbbetadif$ in the
case~(C-\ref{it:finite-dim-case}). 
\end{proof}

The proof in the case~(C-\ref{it:LinftyConvinRp}) is postponed to the appendix. The main originality of
Theorem~\ref{theo:argmaxParam} lies in the 
case~(C-\ref{it:LinftyConvinRp}). In this case, Theorem~3.2.2
in~\cite{vandervaart:wellner:1996} cannot be directly applied because Condition~(\ref{it:UnifTight}) is no longer a uniform tightness condition
($K^\paramset$ is not a compact subset of $\ell^\infty(\paramset,\Rset^p)$). The key idea, detailed
in the appendix, is to show that, under Conditions~(\ref{it:isolatedMinimumlimitCLT})--(\ref{it:DefArgminFalCLT}), this
asymptotic tightness of $\hbbetadif_n$ in  $\ell^\infty(\paramset,\Rset^p)$ is inherited from that of $\pencontrastnorm$
assumed in Condition~(\ref{it:LinftyConvContrast}).

\section{Penalized  M-estimation}
\label{sec:penal-m-estim}
\subsection{Uniform consistency}
\label{Consistency}

Standard results on the consistency of M-estimators (see \textit{e.g.}~\cite[Theorem~5.7]{vandervaart:1998}) 
roughly say that if $\hbbeta_n$ is a sequence of minimizers of $M_n$ on $\bPhi$, $M_n$ tends to $M$ with some uniformity and $\bbeta$
is an isolated minimum of $M$ on $\bPhi$, then $\hbbeta_n$ converges to $\bbeta$ in probability. 
We will use the following set of conditions which are slightly weaker than the classical ones. 
\begin{assumption}\label{assum:uniform-consistency}
There exists $\bbeta\in\bPhi$ such that 
  \begin{enumerate}[(i)]
  \item\label{it:consUnfiLowBound} $\displaystyle\sup_{\bphi\in\bPhi}\left\{M(\bphi)-M_n(\bphi)\right\}_+\cp 0$, where
    $a_+=\max(0,a)$ for any $a\in\Rset$; 
  \item\label{it:SimpleConv} $M_n(\bbeta)\cp M(\bbeta)$;
  \item \label{it:IsolMin} for all $\epsilon>0$, $\displaystyle\inf\{M(\bphi)~:~\bphi\in\bPhi,\;d(\bphi,\bbeta)\geq\epsilon\}>M(\bbeta)$,
  \end{enumerate}
where $d$ is a metric endowing the metric space $\bPhi$. 
\end{assumption} 

Let us briefly comment these assumptions.
Conditions~(\ref{it:consUnfiLowBound}) and~(\ref{it:SimpleConv}) are generally replaced by the stronger uniform convergence
condition $\sup_{\bphi\in\bPhi}\left|M(\bphi)-M_n(\bphi)\right|\cp 0$. These weaker conditions are for instance useful when $\bPhi$ is
non-compact since it is then sufficient to show the uniform convergence on a compact subset and provide a lower bound of $M_n$
out of this compact. 
%The point-wise convergence in~(\ref{it:SimpleConv}) is usually obtained using the weak law of large numbers. 
Condition~(\ref{it:IsolMin}) is the standard condition which defines $\bbeta$ as the (unique) isolated minimum of the limit
contrast function.  

We will show that, under Assumption~\ref{assum:uniform-consistency}, provided that $J_n(\beta)$ tends to 0, the minimizer
$\hbbeta_n(\param)$ of $\pencontrast(\bphi,\param)$ converges to $\bbeta(\param)$, \emph{locally uniformly} in $\param$. 
To avoid making measurability assumptions on the path $\param\mapsto\hbbeta_n(\param)$, we need to work with outer
probability to extend the probability to possibly non-measurable sets. Given a probability space $(\Omega,\calF,P)$, we
denote by $P^\ast$ the \emph{outer probability} defined on the subsets of $\Omega$ by
$$
P^\ast(A)=\inf\{P(B):\;B\in\mathcal{F}\text{ with }A\subset B\},\quad A\subseteq\Omega \;. 
$$
We say that a sequence $(Y_n)$ of real-valued maps defined on $\Omega$  converges in $P^\ast$--probability to
0 and denote $Y_n\cpout0$ if, for any $\epsilon>0$, $P^\ast(\{|Y_n|\geq\epsilon\})\to0$. Here $\{|Y_n|\geq\epsilon\}$ is the
usual short-hand notation for the subset $\{\omega\in\Omega~:~|Y_n(\omega)|\geq\epsilon\}$.
When $Y_n$ is measurable as a
map taking values in $\Rset$ endowed with the Borel $\sigma$-field, this
is equivalent to the usual convergence in probability. 

\begin{theo} \label{theo:uniform-consistency}
Suppose that Assumption~\ref{assum:uniform-consistency} holds for some $\bbeta\in\bPhi$, $M$ defined on $\bPhi$ and
$\{M_n(\bphi),\;\bphi\in\bPhi\}$, a sequence of real-valued processes.  
Let $(J_n)$ be a sequence of non-negative functions defined on $\bPhi$ such that $J_n(\bbeta)\to0$.  
Let $\paramset$ be a compact subset of $[0,\infty)$ and suppose that we have a $\bPhi$-valued process $\{\hbbeta_n(\param),\;\param\geq0\}$ such that
\begin{equation}
  \label{eq:DefMestimaPen}
\sup_{\param\in\paramset}  \left\{\pencontrast(\hbbeta_n(\param),\param)-\pencontrast(\bbeta,\param)\right\}_+\cpout 0\;,
\end{equation}
where $\pencontrast$ is defined by~(\ref{eq:DefPenContrast}).
Then $\hbbeta_n(\param)$  converges to $\bbeta$ uniformly in $\param\in\paramset$, in $P^\ast$--probability, that is, 
\begin{equation}
  \label{eq:ConvUnifMestPen}
  \sup_{\param\in\paramset}d(\hbbeta_n(\param),\bbeta)\cpout 0\;.
\end{equation}
\end{theo}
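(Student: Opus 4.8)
The plan is to adapt the classical argmin‑consistency argument (as in \cite[Theorem~5.7]{vandervaart:1998}) to the penalized setting, uniformly in $\param$. The decisive structural observation is that, since $J_n\geq0$ on all of $\bPhi$ and $\param\geq0$, the penalty term only strengthens the lower bound $\pencontrast(\bphi,\param)\geq M_n(\bphi)$, while at the reference point $\bbeta$ it is controlled uniformly over $\paramset$ by $(\max\paramset)\,J_n(\bbeta)$, which tends to $0$ because $\paramset\subseteq[0,\infty)$ is compact and $J_n(\bbeta)\to0$. Thus the penalty essentially drops out of the comparison, and the proof reduces to the unpenalized one up to an extra vanishing deterministic term.

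More precisely, I would fix $\epsilon>0$ and, by Assumption~\ref{assum:uniform-consistency}-(\ref{it:IsolMin}), choose $\delta>0$ with $\inf\{M(\bphi)~:~d(\bphi,\bbeta)\geq\epsilon/2\}\geq M(\bbeta)+\delta$. Put $A_n=\sup_{\bphi\in\bPhi}\{M(\bphi)-M_n(\bphi)\}_+$, $B_n=|M_n(\bbeta)-M(\bbeta)|$, $C_n=(\max\paramset)\,J_n(\bbeta)$, and let $D_n$ denote the left‑hand side of~(\ref{eq:DefMestimaPen}); then $A_n\cp0$ and $B_n\cp0$ by Assumption~\ref{assum:uniform-consistency}-(\ref{it:consUnfiLowBound})--(\ref{it:SimpleConv}), $C_n\to0$ deterministically, and $D_n\cpout0$ by~(\ref{eq:DefMestimaPen}). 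On the event $\{\sup_{\param\in\paramset}d(\hbbeta_n(\param),\bbeta)\geq\epsilon\}$ there is some $\param_0\in\paramset$ with $d(\hbbeta_n(\param_0),\bbeta)\geq\epsilon/2$, hence $M(\hbbeta_n(\param_0))\geq M(\bbeta)+\delta$. Using $\pencontrast(\hbbeta_n(\param_0),\param_0)\geq M_n(\hbbeta_n(\param_0))\geq M(\hbbeta_n(\param_0))-A_n$ together with $\pencontrast(\bbeta,\param_0)=M_n(\bbeta)+\param_0 J_n(\bbeta)\leq M(\bbeta)+B_n+C_n$, one gets $D_n\geq\pencontrast(\hbbeta_n(\param_0),\param_0)-\pencontrast(\bbeta,\param_0)\geq\delta-A_n-B_n-C_n$. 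Consequently that event is contained in $\{D_n\geq\delta/4\}\cup\{A_n\geq\delta/4\}\cup\{B_n\geq\delta/4\}\cup\{C_n\geq\delta/4\}$.

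It then remains to take outer probabilities: by monotonicity and finite subadditivity of $P^\ast$, $P^\ast(\sup_{\param\in\paramset}d(\hbbeta_n(\param),\bbeta)\geq\epsilon)$ is bounded by the sum of the $P^\ast$‑probabilities of the four events on the right, each of which tends to $0$ (the last one being eventually empty since $C_n\to0$), which is exactly~(\ref{eq:ConvUnifMestPen}). I do not expect a genuine analytic obstacle: the real content has already been supplied, namely the uniform near‑minimality~(\ref{eq:DefMestimaPen}) and the isolation condition Assumption~\ref{assum:uniform-consistency}-(\ref{it:IsolMin}). The only points that demand care are the bookkeeping with outer probability — the path $\param\mapsto\hbbeta_n(\param)$ is not assumed measurable, so every probability statement must be read in the $P^\ast$‑sense and one relies on monotonicity and subadditivity of $P^\ast$ — and the harmless passage from $\sup_{\param}d(\hbbeta_n(\param),\bbeta)\geq\epsilon$ to the existence of a single $\param_0$ with $d(\hbbeta_n(\param_0),\bbeta)\geq\epsilon/2$.
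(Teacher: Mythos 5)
Your proof is correct and follows essentially the same route as the paper: the paper merely factors the argument through the general pathwise result (Proposition~\ref{prop:uniform-consistency}, applied with $\Lambda(\bphi,\param)=M(\bphi)$ and $\bbeta(\param)=\bbeta$), whose proof is exactly your decomposition — drop the nonnegative penalty at $\hbbeta_n(\param)$, bound it by $(\max\paramset)J_n(\bbeta)\to0$ at $\bbeta$, invoke the positive gap from the isolated-minimum condition, and finish with monotonicity and subadditivity of $P^\ast$. No gap to report.
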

\begin{remark}
In statistical applications the contrast function $M$ in
Assumption~\ref{assum:uniform-consistency} depends on the unknown distribution of the contrast process $M_n$ and thus  
$\bbeta$ is an unknown point of $\bPhi$. In particular, the convergence condition $J_n(\bbeta)\to0$ has to be verified for any 
$\bbeta\in\bPhi$ (but not uniformly in $\bbeta$) and it simply amounts to correctly 
normalize the penalty $J_n$ as $n\to\infty$.
\end{remark}

\begin{remark}
The same result holds if the convergence in $P$-probability in
Assumption~\ref{assum:uniform-consistency}-(\ref{it:consUnfiLowBound}) is replaced by a convergence in $P^\ast$-probability. 
However, in applications, the smoothness properties of $\bphi\mapsto M_n(\bPhi)$ and $\bphi\mapsto M(\bPhi)$ usually imply
that $\sup_{{\bphi}\in{\bPhi}}\{M(\bphi)-M_n(\bphi)\}_+$  is a measurable function.  
\end{remark}
\begin{remark}
The fact that the outer probability $P^*$ appears in~(\ref{eq:DefMestimaPen}) does not bring real
difficulties in applications.
Indeed Condition~(\ref{eq:DefMestimaPen}) follows from the definition of $\hbbeta_n(\param)$ as a near minimizer
of $\pencontrast(\cdot,\param)$, that is, if  $\hbbeta_n(\param)$ satisfies
$$
\pencontrast(\hbbeta_n(\param),\param)\leq \inf_{\bphi\in\bPhi} \pencontrast(\bphi;\param) + u_n \;,   
$$
with $u_n=o_P(1)$ not depending on $\param$, \textit{e.g.} $u_n=0$ (perfect minimizer) or $u_n=n^{-1}$ (near
minimizer). The numerical computation of a near minimizer is a difficult task in general, in particular in the presence of  
several local minima. We will focus on convexity assumptions in Section~\ref{sec:ConvexCase}, which cover many cases of
interest and which usually allow tractable numerical computation of $\hbbeta_n(\param)$ for any $\param$. 
\end{remark}
\begin{remark}
Although $\hbbeta_n(\param)$ is an r.v.  for any $\param$,  the map
$\sup_{\param\in\paramset}\|\hbbeta_n(\param)-\bbeta\|$ defined on $\Omega$ may not be measurable (it is in some
particular cases, for instance if the map $\param\mapsto\hbbeta_n(\param)$ is continuous). This is where
the outer probability is useful. Nevertheless, for any $\param\geq0$, the event
$\{d(\hbbeta_n(\param),\bbeta)\geq\epsilon\}$ is measurable, and its probability is less than the left-hand side of
Eq.~(\ref{eq:ConvUnifMestPen}); hence, for any $\param\geq0$, 
$\hbbeta_n(\param)\cp\bbeta(\param)$. 
%This is the consistency result obtained by \citeauthor{KNI00} in \cite{KNI00} in the lasso case.
\end{remark}
\begin{remark}
For $L=0$ in~(\ref{eq:ConvUnifMestPen}), we get a standard result on the consistency of M-estimators (without penalty). It is important to
notice that the consistency of penalized M-estimators is obtained for free, in the sense that no additional assumption
on $M_n$ or $M$ is required and the only assumption on $J_n$ is  $J_n(\bbeta)\to0$.  
\end{remark}

\subsection{Uniform consistency in the convex case}
\label{sec:ConvexCase}

In this section, we consider the following assumption.  
\begin{assumption}[convexity assumption]
  \label{assump:convex}
$\bPhi$ is a convex subset of an Euclidean space endowed with the norm $\|\cdot\|$ and $M_n$ is a convex real-valued function
on $\bPhi$ almost surely. Let $V\subseteq\bPhi$ be a neighborhood of the point $\bbeta$ and $\Delta$
be a  strictly convex real-valued function  defined on $V$ such that   
\begin{enumerate}[(i)]
%\item\label{it:convex-g} for any $(x,y)\in\calX\times\calY$, $\delta((x,y),\cdot)$ is a convex function on $\bPhi$;
\item\label{it:pointwise-conv} for any $\bphi\in V$, $M_n(\bphi) \cp \Delta(\bphi)$;
\item\label{it:convex-minimum} $\Delta(\bphi)\geq\Delta(\bbeta)$ for all $\bphi\in V$.
\end{enumerate}
\end{assumption}

Convex M-estimation is considered in \cite{haberman:1989} and somewhat simplified in \cite{niemiro:1992}.
In the following result the convexity assumption is twofold. First it implies
Assumption~\ref{assum:uniform-consistency}. Second, if the contrast the penalty $J_n$ is
strictly convex, then the minimization of~(\ref{eq:DefPenContrast}) has a unique solution and
this solution path is continuous, which allows to replace the outer probability in~(\ref{eq:ConvUnifMestPen}) by a
standard probability. 
Convexity is also useful in practice since $\hbbeta_n(\param)$  can  be computed using efficient numerical procedure for
convex optimization (see \cite{boyd:vandenberghe:2004}). 

\begin{theo}\label{theo:ConvexConsistence}
Suppose that Assumption~\ref{assump:convex} holds.
Let $(J_n)$ be a sequence of non-negative functions defined on $\bPhi$ such that $J_n(\bbeta)\to0$ and define $\pencontrast$ as
in~(\ref{eq:DefPenContrast}).   
Then the 3 following assertions hold. 
\begin{enumerate}[(a)]
\item\label{it:ConvUnifMestPenConvex} For any $L\geq0$, if we have a $\bPhi$-valued process $\{\hbbeta_n(\param),\;\param\geq0\}$
  satisfying~(\ref{eq:DefMestimaPen}), $\hbbeta_n(\param)$  converges to $\bbeta$ uniformly in $\param\in[0,L]$, in  
$P^\ast$--probability, that is,~(\ref{eq:ConvUnifMestPen}) holds.
\item\label{it:StrictlyConvex} If $J_n$ is strictly convex on $\bPhi$, then it is always possible to define a deterministic
  non-negative sequence $(L_n)$ with $L_n\to\infty$, a sequence $(A_n)$ of events in $\calF$ with $P(A_n)\to1$, and, for each 
  $n$, a collection $\{\hbbeta_n(\param),\;\param\geq0\}$ of r.v.'s satisfying the two
  following properties.
\begin{enumerate}[(\ref{it:StrictlyConvex}1)]
\item\label{it:UniqueMinProperty} For all $\param\in[0,L_n]$ and $\omega\in A_n$,  
  $\pencontrast(\hbbeta_n(\omega,\param),\param)$ is a minimum of $\pencontrast(\bphi,\param)$ on $\bphi\in\bPhi$ and this
  minimum is unique for $\param>0$.  
\item\label{it:ConvexContProperty} For all $\omega\in \Omega$, $\hbbeta_n(\omega,\cdot)$ is a continuous function on
  $(0,L_n]$ and on $(L_n,\infty)$.
\end{enumerate}
As consequences,~(\ref{eq:DefMestimaPen}) holds for any $L>0$ and the uniform convergence~(\ref{eq:ConvUnifMestPen}) 
  holds in $P$--probability, that is, 
\begin{equation} 
  \label{eq:ConvUnifMestPenStandardProb}
  \sup_{\param\in[0,L]}\|\hbbeta_n(\param)-\bbeta\|\cp 0\;.
\end{equation}
\item\label{it:MStrictlyConvex}  If $M_n$ is strictly convex on $\bPhi$ for all $n$, then the conclusions of (\ref{it:StrictlyConvex})
  hold with Properties~(\ref{it:UniqueMinProperty}) and~(\ref{it:ConvexContProperty}) strengthened as follows.
\begin{enumerate}[(\ref{it:MStrictlyConvex}1)]
\item\label{it:UniqueMinPropertyPrim} For all $\param\in[0,L_n]$ and $\omega\in A_n$,  
  $\pencontrast(\hbbeta_n(\omega,\param),\param)$ is the unique minimum of $\pencontrast(\bphi,\param)$ on $\bphi\in\bPhi$.
\item\label{it:ConvexContPropertyPrim} For all $\omega\in \Omega$, $\hbbeta_n(\omega,\cdot)$ is a continuous function on
  $[0,L_n]$ and on $(L_n,\infty)$.
\end{enumerate}
\end{enumerate}
\end{theo}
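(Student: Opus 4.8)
The plan is to extract from Assumption~\ref{assump:convex} one coercivity estimate and reuse it everywhere. \emph{The estimate:} fix a closed ball $\bar B(\bbeta,r)\subset V$ small enough that $M_n$ (being convex and real-valued, it is continuous there, e.g.\ because $\bbeta$ lies in the interior of $\bPhi$) and $J_n$ are continuous on it. By the classical convexity lemma — pointwise convergence of convex functions on a dense set entails locally uniform convergence, hence the same in probability — Assumption~\ref{assump:convex}-(\ref{it:pointwise-conv}) gives $\sup_{\bar B(\bbeta,r)}|M_n-\Delta|\cp0$. Strict convexity of $\Delta$ together with Assumption~\ref{assump:convex}-(\ref{it:convex-minimum}) forces $\Delta>\Delta(\bbeta)$ on $\bar B(\bbeta,r)\setminus\{\bbeta\}$, so $c_\delta:=\inf\{\Delta(\bphi):\|\bphi-\bbeta\|=\delta\}-\Delta(\bbeta)>0$ for each $\delta\in(0,r]$. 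Combining the uniform convergence on the sphere $\{\|\bphi-\bbeta\|=\delta\}$ with $M_n(\bbeta)\cp\Delta(\bbeta)$ produces events $A_n^\delta\in\calF$ with $P(A_n^\delta)\to1$ on which $\inf\{M_n(\bphi):\|\bphi-\bbeta\|=\delta\}\ge M_n(\bbeta)+c_\delta/2$, and convexity of $M_n$ on $\bPhi$ then propagates this along the segments $[\bbeta,\bphi]$ to $M_n(\bphi)\ge M_n(\bbeta)+c_\delta/2$ for every $\bphi\in\bPhi$ with $\|\bphi-\bbeta\|\ge\delta$. Measurability of $A_n^\delta$ and of the infima is obtained by restricting to rational points, using continuity of $M_n(\omega,\cdot)$.

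\emph{Part~(\ref{it:ConvUnifMestPenConvex}).} Since $J_n\ge0$, on $A_n^\delta$ one has $\pencontrast(\bphi,\param)\ge M_n(\bbeta)+c_\delta/2$ for $\|\bphi-\bbeta\|\ge\delta$ and all $\param\ge0$, whereas $\pencontrast(\bbeta,\param)=M_n(\bbeta)+\param J_n(\bbeta)\le M_n(\bbeta)+L\,J_n(\bbeta)$ with $L\,J_n(\bbeta)\to0$; hence for $n$ large, $\inf_{\|\bphi-\bbeta\|\ge\delta}\pencontrast(\bphi,\param)\ge\pencontrast(\bbeta,\param)+c_\delta/4$ uniformly in $\param\in[0,L]$. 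Combined with the defining property~(\ref{eq:DefMestimaPen}), this shows that $\{\sup_{\param\in[0,L]}d(\hbbeta_n(\param),\bbeta)\ge\delta\}$ is contained, up to a set of outer probability tending to $0$, in $(A_n^\delta)^c$; letting $\delta\downarrow0$ yields~(\ref{eq:ConvUnifMestPen}). (Equivalently, the estimate shows that Assumption~\ref{assump:convex} implies Assumption~\ref{assum:uniform-consistency} for a suitable bounded deterministic $M$ — e.g.\ $M=\Delta\wedge(\Delta(\bbeta)+c_r)$ on $\bar B(\bbeta,r)$ and $M\equiv\Delta(\bbeta)+c_r/2$ off it — so that Theorem~\ref{theo:uniform-consistency} applies directly.)

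\emph{Parts~(\ref{it:StrictlyConvex}) and~(\ref{it:MStrictlyConvex}).} Fix a small $\delta\in(0,r]$ and a point $\bphi_0\in\bPhi$, pick a deterministic $L_n\to\infty$ with $L_n\,J_n(\bbeta)\to0$ (possible since $J_n(\bbeta)\to0$), and set $A_n:=A_n^\delta$ for $n$ beyond a deterministic threshold and $A_n:=\emptyset$ for the finitely many small $n$; then $P(A_n)\to1$. For large $n$ and $\omega\in A_n$, the estimate together with $L_n\,J_n(\bbeta)\to0$ gives $\pencontrast(\bphi,\param)>\pencontrast(\bbeta,\param)$ for every $\param\in[0,L_n]$ whenever $\|\bphi-\bbeta\|\ge\delta$, so $\inf_\bPhi\pencontrast(\cdot,\param)$ equals its infimum over the compact $\bar B(\bbeta,\delta)$, on which $\pencontrast(\cdot,\param)=M_n+\param J_n$ is continuous; hence the minimum is attained in the open ball $B(\bbeta,\delta)$, and it is the unique minimizer over $\bPhi$ for $\param>0$ (strict convexity of $\param J_n$), and for all $\param\ge0$ when $M_n$ is strictly convex. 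I then define $\hbbeta_n(\omega,\param)$, for $\omega\in A_n$, as this minimizer on $(0,L_n]$ (a measurable selection at $\param=0$ in case~(\ref{it:StrictlyConvex})), as the constant $\hbbeta_n(\omega,L_n)$ on $(L_n,\infty)$, and as $\bphi_0$ for $\omega\notin A_n$; measurability in $\omega$ for fixed $\param$ follows by realizing the unique minimizer as a limit of argmins over finite rational grids (plus a measurable selection theorem at $\param=0$). This gives Property~(\ref{it:UniqueMinProperty}), resp.\ (\ref{it:UniqueMinPropertyPrim}). Continuity on $(L_n,\infty)$ is immediate, and continuity on $(0,L_n]$ — resp.\ on $[0,L_n]$ when $M_n$ is strictly convex — follows from the standard fact that the argmin of a jointly continuous map that is strictly convex in its first argument and whose minimizers all lie in one fixed compact set depends continuously on the parameter (subsequence extraction plus uniqueness of the limiting minimizer); for $\omega\notin A_n$ the path is constant. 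This gives Property~(\ref{it:ConvexContProperty}), resp.\ (\ref{it:ConvexContPropertyPrim}). Finally, for the consequences: for fixed $L$ and $n$ so large that $L\le L_n$, the $\hbbeta_n(\param)$ are on $A_n$ exact minimizers for $\param\in[0,L]$, so the bracket in~(\ref{eq:DefMestimaPen}) vanishes there and~(\ref{eq:DefMestimaPen}) holds; part~(\ref{it:ConvUnifMestPenConvex}) then gives $\sup_{\param\in[0,L]}d(\hbbeta_n(\param),\bbeta)\cpout0$, and since the path is continuous on $(0,L]$ this supremum — together with $d(\hbbeta_n(0),\bbeta)$ — is measurable, so $P^\ast$ may be replaced by $P$ and~(\ref{eq:ConvUnifMestPenStandardProb}) follows.

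\emph{Main obstacle.} The delicate step is the coercivity underlying~(\ref{it:StrictlyConvex}): one must secure minimizers of $\pencontrast(\cdot,\param)$ \emph{simultaneously} for all $\param$ in an interval $[0,L_n]$ whose length tends to infinity and confine them to a \emph{single} fixed compact set, using only convexity of $M_n$, strict convexity of the limit $\Delta$ near $\bbeta$, non-negativity of $J_n$ and the normalization $J_n(\bbeta)\to0$ — it is precisely the balance $L_n\,J_n(\bbeta)\to0$ that makes this possible. Once the path is localized in this way, its continuity and the measurability points are routine, and~(\ref{it:ConvUnifMestPenConvex}) is in essence this estimate combined with Theorem~\ref{theo:uniform-consistency}.
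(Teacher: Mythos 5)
Your proposal is correct and follows essentially the same route as the paper: upgrade pointwise to locally uniform convergence in probability via the classical convexity lemma, propagate a coercivity bound from a sphere near $\bbeta$ to all of $\bPhi$ by convexity of $M_n$, and exploit the balance $L_n J_n(\bbeta)\to0$ (the paper takes $L_n=\alpha/(4J_n(\bbeta))$) to confine minimizers to a fixed ball uniformly in $\param\in[0,L_n]$, with uniqueness from strict convexity and continuity from a compactness--uniqueness argmin argument. The only cosmetic deviations are that you argue part~(\ref{it:ConvUnifMestPenConvex}) directly (while noting the paper's reduction to Assumption~\ref{assum:uniform-consistency} and Theorem~\ref{theo:uniform-consistency} as an alternative), prove path continuity by subsequence extraction rather than the paper's explicit inequality chain, and handle $\param=0$ by a measurable selection instead of the paper's component-wise $\liminf$.
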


\begin{remark}
The proof of Assertion~(\ref{it:MStrictlyConvex}) is somewhat simpler than Assertion~(\ref{it:StrictlyConvex}). 
However, in some cases, the first purpose of the penalty $J_n$ is precisely to solve an ill-posed problem such as in the
ridge regression (see~\cite{hoerl70}) where $M_n(\bphi)=\sum_k(y_k-\bx_k^T\bphi)^2$, $J_n(\bphi)\propto\|\bphi\|^2$ and the regression matrix
$\bX_n=[\bx_1\;\;\dots\;\;\bx_n]^T$ is not full rank. Thus $J_n$ is strictly convex and $M_n$ is not, in which case
Assertion~(\ref{it:StrictlyConvex}) can be useful.   
\end{remark}

\subsection{Functional central limit theorem}
\label{sec:penMestCLT}

Some general conditions for proving 
$\sqrt{n}$ asymptotic normality for M-estimators rely on the so called stochastic
differentiability condition introduced in \cite{POL85}.  
They exploit the idea introduced in \cite{huber:1967} of using strong differentiability conditions on the limit contrast 
function rather than on the contrast process. Moreover it is explained in \cite{POL85} how the empirical process theory can
be used to prove the   stochastic differentiability condition. 
Extensions of these ideas can be found in~\cite{vandervaart:wellner:1996}. 

In \cite{POL85}, Pollard proves the asymptotic normality of M-estimators
based on a contrast process of the form 
\begin{equation}
  \label{eq:contrastIIDMest}
M_n(\bphi)=n^{-1}\sum_{k=1}^n g(\xi_k,\bphi)=P_n g(\cdot,\bphi) \; ,
\end{equation} 
where $(\xi_k)$ is a sequence of $\Xset$-valued random variables and $g$ is a $\Xset\times\Rset^p$ function 
satisfying the following Taylor expansion around a given point $\bbeta\in\Rset^p$,
  \begin{equation}
    \label{eq:TaylorOnG}
    g(x,\bphi)=g(x,\bbeta)+(\bphi-\bbeta)^T\Delta(x) +\|\bphi-\bbeta\|\,r(x,\bphi)\;.
  \end{equation}

We will show that if the $\sqrt{n}$ asymptotic normality conditions in
\cite{POL85} are verified and if the penalty satisfies mild asymptotic conditions then the penalized
version of the M-estimator satisfies a  CLT  similar to the CLT in~\cite{KNI00} for the mean square criterion.   
Moreover this CLT applies to the regularization path in a functional sense.
% A quite different approach is to use convex assumptions on the contrast process, for which
% some general results on the  asymptotic normality of M-estimators are also available \cite{haberman:1989}, \cite{niemiro:1992},
% \cite{geyer:1996} , \cite{hjort:pollard:1993}. 

Let us recall Pollard's conditions that we will use on the contrast process $M_n$ defined by~(\ref{eq:contrastIIDMest})
and~(\ref{eq:TaylorOnG}). 
\begin{enumerate}[(P-1)]
\item\label{it:Pollardiid} $(\xi_k)$ is a sequence of i.i.d. random variables with distribution $P$;
\item\label{it:PollardMder} the function $M(\bphi)=Pg(\cdot,\bphi)$ has a nonsingular second derivative $\Gamma$ at $\bbeta\in\Rset^p$;
\item\label{it:PollardDelta} $P\|\Delta\|^2<\infty$ and $P\Delta=0$;
\item\label{it:SDC} the stochastic differentiability condition holds on $r$, that is, for any sequence of positive r.v. $(r_n)$ such that $r_n\cp0$,
  \begin{equation}
    \label{eq:SDC}
    \sup_{\|\bphi-\bbeta\|\leq r_n}\frac{\left|\nu_n\,r(\cdot,\bphi)\right|}{1+\sqrt{n}\|\bphi-\bbeta\|}\cp 0 \;.
  \end{equation}
\end{enumerate}
Here we used the notations, standard in the empirical process literature, $Pf$, $P_nf$ and $\nu_nf$ for $\int f \rmd P$,
$n^{-1}\sum_{k=1}^nf(\xi_k)$ and $\sqrt{n}(P_nf-Pf)$, respectively.   
Theorem~\ref{theo:PollardWithPen} below provides a central limit theorem for the regularization path defined on the penalized 
contrast~(\ref{eq:DefPenContrast}) when $M_n$ satisfies Pollard's conditions~(P-\ref{it:Pollardiid})--(P-\ref{it:SDC}) with 
some mild conditions on the penalty $J_n$.

\begin{theo}
\label{theo:PollardWithPen}
Let  $\bPhi=\Rset^p$, $p\geq1$ and $\paramset$ be a compact subset of $[0,\infty)$.
Define  $\Lambda_n$ as in~(\ref{eq:DefPenContrast}), where $M_n$ is defined by~(\ref{eq:contrastIIDMest}) and satisfies
Pollard's conditions~(P-\ref{it:Pollardiid})--(P-\ref{it:SDC})
and $J_n$ is a sequence of deterministic non-negative functions defined on $\Rset^p$.
Further assume that there exists a positive constant $C$ such that
\begin{equation}
  \label{eq:condJn}
n\;\left|J_{n}(\bphi)-J_{n}(\bbeta)\right|\leq C\, (1+\sqrt{n}\,||\bphi-\bbeta||)\quad\text{for}\quad\|\bphi-\bbeta\|\leq1\;,
\end{equation}
and, for any compact $K\subset\Rset^p$, 
\begin{equation}
  \label{eq:penAsymp}
\sup_{\bphi\in K}  \left|n\;J_{n}(\bbeta+n^{-1/2}\bphi)-n\;J_n(\bbeta)-J_{\infty}(\bphi)\right| \to 0 \;,
\end{equation}
where $J_{\infty}$ is a real-valued function on $\bPhi$.
Let $\{\hbbeta_n,\;\param\in\paramset\}$ be a sequence of $\bPhi$-valued processes 
satisfying~(\ref{eq:DefMestimaPen}) and such that the uniform $P^*$-consistency~(\ref{eq:ConvUnifMestPen}) holds.
Let $W$ be a centered Gaussian $p$-dimensional vector with covariance $P(\Delta\Delta^T)$ and define 
\begin{equation}
  \label{eq:pencontrastlimDefPolard}
  \pencontrastlim(\bphi,\param)=W^T\phi+\bphi^T\Gamma\bphi+ \param J_{\infty}(\bphi) \; .
\end{equation}
Finally assume that there exists a $\bPhi$-valued process 
$\{\hbbetadif(\param),\;\param\in\paramset\}$ such that
Conditions~(\ref{it:isolatedMinimumlimitCLT}) and~(\ref{it:limitTight}) in
Theorem~\ref{theo:argmaxParam} hold.
Then there is a version of $\hbbetadif$ in $\ell^\infty(\paramset,\Rset^p)$ and 
\begin{equation}
  \label{eq:PollardCLTStochDiff}
\sqrt{n}(\hbbeta_n-\bbeta)\wc\hbbetadif  \;.
\end{equation}
\end{theo}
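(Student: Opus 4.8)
The plan is to derive~(\ref{eq:PollardCLTStochDiff}) from the pathwise argmin theorem, Theorem~\ref{theo:argmaxParam}, used in case~(C-\ref{it:LinftyConvinRp}) (so $\bPhi=\Rset^p$, $\calD=\ell^\infty(\paramset,\Rset^p)$), applied to the rescaled and recentered objects $\hbbetadif_n(\param)=\sqrt n\,(\hbbeta_n(\param)-\bbeta)$ and $\pencontrastnorm(\bphi,\param)=n\,[\Lambda_n(\bbeta+n^{-1/2}\bphi,\param)-\Lambda_n(\bbeta,\param)]$, together with $\pencontrastlim$ as in~(\ref{eq:pencontrastlimDefPolard}) and the process $\hbbetadif$ supplied by the hypotheses. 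Two observations organise the verification. First, $\hbbetadif_n(\param)$ (nearly) minimises $\pencontrastnorm(\cdot,\param)$ precisely when $\hbbeta_n(\param)$ does so for $\Lambda_n(\cdot,\param)$. Second, $\pencontrastnorm$ splits additively as $\pencontrastnorm(\bphi,\param)=A_n(\bphi)+\param\,B_n(\bphi)$, with $A_n(\bphi)=n\,[M_n(\bbeta+n^{-1/2}\bphi)-M_n(\bbeta)]$ carrying all the randomness and $B_n(\bphi)=n\,[J_n(\bbeta+n^{-1/2}\bphi)-J_n(\bbeta)]$ deterministic. Conditions~(\ref{it:isolatedMinimumlimitCLT}) and~(\ref{it:limitTight}) of Theorem~\ref{theo:argmaxParam} are assumed, so the work is to establish its Conditions~(\ref{it:LinftyConvContrast}), (\ref{it:UnifTight}) and~(\ref{it:DefArgminFalCLT}) for these quantities.

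For Condition~(\ref{it:LinftyConvContrast}) I would run Pollard's stochastic-differentiability argument while keeping track of $\param$. On the deterministic side,~(\ref{eq:penAsymp}) gives $B_n\to J_\infty$ uniformly on each compact $K\subset\Rset^p$, hence $\param\,B_n\to\param\,J_\infty$ uniformly on $K\times\paramset$ since $\paramset$ is bounded. On the random side, the Taylor expansion~(\ref{eq:TaylorOnG}) together with $P\Delta=0$ from~(P-\ref{it:PollardDelta}) gives $A_n(\bphi)=\bphi^{T}\nu_n\Delta+\|\bphi\|\,n^{1/2}P_nr(\cdot,\bbeta+n^{-1/2}\bphi)$; writing $n^{1/2}P_nr=n^{1/2}Pr+\nu_nr$, the deterministic part $n^{1/2}\|\bphi\|\,Pr(\cdot,\bbeta+n^{-1/2}\bphi)$ converges to $\bphi^{T}\Gamma\bphi$ uniformly on $K$ by the nonsingular second-order differentiability of $M$ at $\bbeta$~(P-\ref{it:PollardMder}), while the stochastic part $\|\bphi\|\,\nu_nr(\cdot,\bbeta+n^{-1/2}\bphi)$ tends to $0$ uniformly on $K$ (in outer probability) by the stochastic differentiability condition~(\ref{eq:SDC}) used with $r_n$ of order $n^{-1/2}$. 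Thus $A_n=\Psi(\nu_n\Delta)+o_{P^*}(1)$ in $\ell^\infty(K)$, where $\Psi(w)(\bphi)=w^{T}\bphi+\bphi^{T}\Gamma\bphi$ is continuous from $\Rset^p$ to $\ell^\infty(K)$; since $\nu_n\Delta$ converges in law to $W\sim\calN(0,P(\Delta\Delta^{T}))$ by the central limit theorem (recall $P\|\Delta\|^2<\infty$), the continuous mapping theorem and Slutsky's lemma give $A_n\wc\Psi(W)$ in $\ell^\infty(K)$. Adding the two contributions yields $\pencontrastnorm\wc\pencontrastlim$ in $\ell^\infty(K\times\paramset)$ with $\pencontrastlim$ exactly as in~(\ref{eq:pencontrastlimDefPolard}); being a continuous image of the Gaussian vector $W$ (with $J_\infty$ continuous on $K$, which follows from~(\ref{eq:penAsymp}) in the relevant examples), $\pencontrastlim$ is a tight Borel element of $\ell^\infty(K\times\paramset)$.

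The main obstacle is Condition~(\ref{it:UnifTight}), the uniform $\sqrt n$-rate $\sup_{\param\in\paramset}\sqrt n\,\|\hbbeta_n(\param)-\bbeta\|=O_{P^*}(1)$: this is the step for which case~(C-\ref{it:LinftyConvinRp}) of Theorem~\ref{theo:argmaxParam} is designed, $K^{\paramset}$ not being compact in $\ell^\infty(\paramset,\Rset^p)$. I would obtain it by the classical rate-of-convergence argument for M-estimators (cf.\ \cite{POL85,vandervaart:wellner:1996}), made uniform in $\param$. The uniform consistency~(\ref{eq:ConvUnifMestPen}) places, with probability tending to $1$, every $\hbbeta_n(\param)$, $\param\in\paramset$, in a shrinking ball $\{\|\bphi-\bbeta\|\leq\delta_n\}$, $\delta_n\to0$, on which $M(\bphi)-M(\bbeta)\geq c\,\|\bphi-\bbeta\|^2$ for some $c>0$ by~(P-\ref{it:PollardMder}). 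On that event, the Taylor expansion~(\ref{eq:TaylorOnG}) and~(\ref{eq:SDC}) used with $r_n=\delta_n$ give, for $\|\bphi-\bbeta\|\leq\delta_n$ and $n$ large, the lower bound $n\,[M_n(\bphi)-M_n(\bbeta)]\geq\tfrac{c}{2}\,s^{2}-s\,(\|\nu_n\Delta\|+\varepsilon_n)$ with $s=\sqrt n\,\|\bphi-\bbeta\|$ and $\varepsilon_n\cp0$, while~(\ref{eq:condJn}) and $\sup\paramset<\infty$ bound the penalty increment by $\param\,n\,|J_n(\bphi)-J_n(\bbeta)|\leq C'\,(1+s)$ uniformly in $\param$. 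Putting $\bphi=\hbbeta_n(\param)$ and using that $\hbbeta_n(\param)$ is defined as a near minimiser of $\Lambda_n(\cdot,\param)$ up to a uniformly $o_{P^*}(n^{-1})$ error (in particular an exact minimiser — this is slightly more than~(\ref{eq:DefMestimaPen}), but holds in the applications), so that $n\,[\Lambda_n(\hbbeta_n(\param),\param)-\Lambda_n(\bbeta,\param)]\leq o_{P^*}(1)$, one gets a quadratic inequality $\tfrac{c}{2}\,s_\param^{2}-s_\param\,(\|\nu_n\Delta\|+\varepsilon_n+C')-C'\leq o_{P^*}(1)$ for $s_\param=\sqrt n\,\|\hbbeta_n(\param)-\bbeta\|$ whose coefficients do not depend on $\param$. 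Since $\|\nu_n\Delta\|=O_{P^*}(1)$, solving it yields $\sup_{\param\in\paramset}s_\param=O_{P^*}(1)$, which is Condition~(\ref{it:UnifTight}).

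Condition~(\ref{it:DefArgminFalCLT}) is then immediate: if $\hbbeta_n(\param)$ is an exact (resp.\ $o_{P^*}(n^{-1})$-near) minimiser of $\Lambda_n(\cdot,\param)$, then $\hbbetadif_n(\param)$ is an exact (resp.\ near) minimiser of $\pencontrastnorm(\cdot,\param)$, so the left-hand side of~(\ref{eq:DefArgminFalCLT}) equals $n\sup_{\param\in\paramset}\{\Lambda_n(\hbbeta_n(\param),\param)-\inf_{\bphi\in\Rset^p}\Lambda_n(\bphi,\param)\}_+=o_{P^*}(1)$. With Conditions~(\ref{it:LinftyConvContrast})--(\ref{it:DefArgminFalCLT}) in hand, Theorem~\ref{theo:argmaxParam} in case~(C-\ref{it:LinftyConvinRp}) applies and produces a version of $\hbbetadif$ in $\ell^\infty(\paramset,\Rset^p)$ together with $\hbbetadif_n\wc\hbbetadif$, which is~(\ref{eq:PollardCLTStochDiff}). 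Apart from the uniform rate, everything is Pollard's argument made locally uniform in $\param$, which is painless because $\param$ enters both $\pencontrastnorm$ and $\pencontrastlim$ only through the deterministic factor $\param\,J_\infty$.
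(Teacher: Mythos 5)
Your proof is correct and ultimately rests on the same machinery as the paper's, namely Theorem~\ref{theo:argmaxParam} in case~(C-\ref{it:LinftyConvinRp}) applied to $\pencontrastnorm(\bphi,\param)=n\{\Lambda_n(\bbeta+n^{-1/2}\bphi,\param)-\Lambda_n(\bbeta,\param)\}$, but you organize it differently. The paper proves the theorem as a two-step application of its general pathwise results: it exhibits the decomposition~(\ref{eq:ContrasteDecompVitesse}) with $G_n(\bphi,\param)=(\bphi-\bbeta)^TP_n\Delta+\param(J_n(\bphi)-J_n(\bbeta))\1(\|\bphi-\bbeta\|\leq1)$, $H(\bphi,\param)=M(\bphi)-M(\bbeta)$ and $R_n$ collecting $n^{-1/2}\nu_n r(\cdot,\bphi)$ together with the penalty increment on $\|\bphi-\bbeta\|>1$, checks the hypotheses of Theorem~\ref{theo:CLTStochDiff}, and lets Proposition~\ref{prop:vitesse} deliver the uniform $\sqrt{n}$-rate and Theorem~\ref{theo:CLTStochDiff} the convergence $\pencontrastnorm\wc\pencontrastlim$. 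You bypass these intermediate results and verify the conditions of Theorem~\ref{theo:argmaxParam} directly: your treatment of Condition~(\ref{it:LinftyConvContrast}) is exactly the specialization of the paper's $\widehat{G}_n\wc G$ step plus the deterministic quadratic part, and your quadratic-inequality rate argument (localization by uniform consistency, the SDC with $r_n=\delta_n$, and~(\ref{eq:condJn}) to absorb the penalty increment uniformly over the compact $\paramset$) is precisely the content of Proposition~\ref{prop:vitesse} reproved inline in the i.i.d.\ Pollard setting. The paper's factorization buys reusable general statements; your version is self-contained but repeats their proofs. Two minor points, neither a real gap: (a) like the paper's own proof, which invokes~(\ref{eq:DefMestimaParamVitesseS}) ``by assumption'', you need near-minimization at the $o_{P^*}(n^{-1})$ scale (and, for Condition~(\ref{it:DefArgminFalCLT}), relative to the infimum), which is slightly more than the literal~(\ref{eq:DefMestimaPen}); you flag this explicitly, which is fair. (b) In Condition~(\ref{it:LinftyConvContrast}) you also need the first derivative of $M$ to vanish at $\bbeta$ (the paper gets it by integrating~(\ref{eq:TaylorOnG}) and using~(P-\ref{it:SDC})), and tightness of $\pencontrastlim$ requires only local boundedness of $J_\infty$, which follows from~(\ref{eq:condJn}), not its continuity.
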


The following lemma shows that the penalties considered in~\cite{KNI00} satisfy Conditions~(\ref{eq:condJn})
and~(\ref{eq:penAsymp}). 

\begin{lem}\label{lem:KFpenalties}
Let $\gamma>0$ and define, for all $\bphi=(\phi_1,\dots,\phi_p)\in\Rset^p$,
\begin{align}
  \label{eq:penaltiesDef}
  J_n^{(\gamma)}(\bphi)=n^{(1\wedge \gamma)/2-1}\sum_{k=1}^p |\phi_k|^\gamma \; .
\end{align}
Then for any $\bbeta\in\Rset^p$, there exists $C>0$ such that, for all $\bphi\in\Rset^p$,
\begin{equation}
  \label{eq:boundJnKF}
  n\left|J_n^{(\gamma)}(\bphi)-J_n^{(\gamma)}(\bbeta)\right|
\leq C\,\left(1+\sqrt{n}\|\bphi-\bbeta\|+\sqrt{n}\|\bphi-\bbeta\|^{1\vee\gamma} \right)\; ,
\end{equation}
and, for any compact $K\subset\Rset^p$,
\begin{equation}
  \label{eq:convJnKF}
\sup_{\bphi\in K}\left|nJ_n^{(\gamma)}(\bbeta+n^{-1/2}\bphi)-nJ_n^{(\gamma)}(\bbeta)-J_\infty^{(\gamma)}(\bphi)\right|\to 0\;,
\end{equation}
where
\begin{equation}
  \label{eq:JinftyKF}
J_\infty^{(\gamma)}(\bphi)=
\begin{cases}
\sum_{j=1}^p |\phi_j|^\gamma {\1}_{\{\beta_j = 0\}}
&\text{ if $\gamma<1$}\\
\sum_{j=1}^p \left\{\phi_j \sgn\left(\beta_j\right) {\1}_{\{\beta_j \neq 0\}} + |\phi_j|
          {\1}_{\{\beta_j = 0\}}\right\} 
&\text{ if $\gamma=1$}\\
\gamma\sum_{j=1}^p \phi_j \sgn\left(\beta_j\right) |\beta_j|^{\gamma-1} {\1}_{\{\beta_j \neq 0\}}
&\text{ if $\gamma>1$.}
\end{cases}
\end{equation}
\end{lem}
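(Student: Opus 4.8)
The plan is to treat each coordinate separately, since both $J_n^{(\gamma)}$ and the candidate limit $J_\infty^{(\gamma)}$ decompose as sums over $k=1,\dots,p$; thus it suffices to establish the two claims for $p=1$, writing $\phi,\beta\in\Rset$ in place of $\phi_k,\beta_k$. For~(\ref{eq:boundJnKF}) I would set $h(t)=|t|^\gamma$ and bound $|h(\phi)-h(\beta)|$ using elementary inequalities: when $\gamma\geq 1$, $h$ is locally Lipschitz with the Lipschitz constant on $[-R,R]$ growing like $R^{\gamma-1}$, so one gets $|h(\phi)-h(\beta)|\leq C_\beta(|\phi-\beta|+|\phi-\beta|^{\gamma})$ by splitting into $|\phi-\beta|\leq 1$ and $|\phi-\beta|>1$; when $\gamma<1$, $h$ is $\gamma$-Hölder, $|h(\phi)-h(\beta)|\leq |\phi-\beta|^\gamma$, which again is dominated by $|\phi-\beta|+|\phi-\beta|^\gamma$. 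Multiplying by the normalizing factor $n\cdot n^{(1\wedge\gamma)/2-1}=n^{(1\wedge\gamma)/2}$ and checking that $n^{(1\wedge\gamma)/2}\cdot|\phi-\beta|^{\gamma}$ is controlled by $1+\sqrt n\,|\phi-\beta|+\sqrt n\,|\phi-\beta|^{1\vee\gamma}$ in all three regimes (using $|t|^\gamma\leq 1+|t|^{1\vee\gamma}$ and $n^{(1\wedge\gamma)/2}\leq\sqrt n$) yields~(\ref{eq:boundJnKF}) after summing over the $p$ coordinates.

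For the expansion~(\ref{eq:convJnKF}), again work coordinatewise and plug in $\phi\mapsto\beta+n^{-1/2}\phi$. The quantity to analyze is
\[
n\,n^{(1\wedge\gamma)/2-1}\bigl(|\beta+n^{-1/2}\phi|^\gamma-|\beta|^\gamma\bigr)=n^{(1\wedge\gamma)/2}\bigl(|\beta+n^{-1/2}\phi|^\gamma-|\beta|^\gamma\bigr).
\]
Here I would distinguish the cases $\beta\neq 0$ and $\beta=0$. If $\beta\neq 0$, then for $n$ large $\beta+n^{-1/2}\phi$ has the same sign as $\beta$ uniformly over $\phi$ in a compact $K$, and a first-order Taylor expansion of $t\mapsto|t|^\gamma=\sgn(\beta)^\gamma\cdot(\text{sign-adjusted})$ gives $|\beta+n^{-1/2}\phi|^\gamma-|\beta|^\gamma=\gamma|\beta|^{\gamma-1}\sgn(\beta)\,n^{-1/2}\phi+O(n^{-1})$ uniformly on $K$; multiplying by $n^{(1\wedge\gamma)/2}$ we get $\gamma|\beta|^{\gamma-1}\sgn(\beta)\,\phi\cdot n^{(1\wedge\gamma)/2-1/2}+o(1)$. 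The exponent $(1\wedge\gamma)/2-1/2$ is $0$ when $\gamma\geq 1$ (giving the $\gamma>1$ line, and for $\gamma=1$ the term $\sgn(\beta)\phi$) and is negative when $\gamma<1$ (so the whole contribution from $\beta\neq 0$ coordinates vanishes, consistent with the $\gamma<1$ line). If $\beta=0$, the expression is exactly $n^{(1\wedge\gamma)/2}\,|n^{-1/2}\phi|^\gamma=n^{(1\wedge\gamma)/2-\gamma/2}|\phi|^\gamma$; the exponent is $0$ precisely when $\gamma\leq 1$ (giving $|\phi|^\gamma$ for $\gamma<1$ and $|\phi|$ for $\gamma=1$) and negative when $\gamma>1$ (so this contribution vanishes, consistent with the $\gamma>1$ line). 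Assembling the per-coordinate limits over $j=1,\dots,p$ reproduces~(\ref{eq:JinftyKF}) exactly, and since all error terms are $O(n^{-1})$ or $o(1)$ uniformly over the compact $K$, the supremum in~(\ref{eq:convJnKF}) tends to $0$.

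The only mildly delicate point — the main obstacle, though it is minor — is making the Taylor expansion of $t\mapsto|t|^\gamma$ near a nonzero $\beta$ genuinely uniform over $\phi\in K$: one needs that $\beta+n^{-1/2}\phi$ stays bounded away from $0$ for all $\phi\in K$ once $n$ is large (immediate since $K$ is compact and $|\beta|>0$), and that the second-order remainder in the expansion is $O(n^{-1})$ with a constant depending only on $\beta$ and $\sup_{\phi\in K}|\phi|$ (again immediate from boundedness of the second derivative of $|t|^\gamma$ on any compact interval not containing $0$). With these uniformity bookkeeping steps in place, both~(\ref{eq:boundJnKF}) and~(\ref{eq:convJnKF}) follow, and the lemma is proved.
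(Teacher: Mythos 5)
Your argument is correct and follows essentially the same route as the paper's proof: the same elementary bounds on $\bigl||\phi|^\gamma-|\beta|^\gamma\bigr|$ (Lipschitz-type for $\gamma\geq 1$, H\"older for $\gamma<1$) combined with the normalization bookkeeping $a^\gamma\leq 1+a$ and $n^{(1\wedge\gamma)/2}\leq\sqrt{n}$ for~(\ref{eq:boundJnKF}), and the first-order Taylor expansion of $t\mapsto|t|^\gamma$ at nonzero coordinates together with the exact computation at zero coordinates for~(\ref{eq:convJnKF}). Your coordinatewise case analysis and the uniformity remarks just make explicit what the paper leaves terse, so nothing further is needed.
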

\begin{remark}
The limit penalties in~(\ref{eq:JinftyKF}) correspond to those in Theorems~2 and~3 in~\cite{KNI00}, except for the
multiplicative constant $\gamma$ in the case $\gamma>1$, which seems to have been forgotten in~\cite{KNI00}.
\end{remark}

\section{Pathwise M-estimation}
\label{sec:MestimCLT}

It turns out that the specific form of the contrast $\pencontrast$ in~(\ref{eq:DefPenContrast}) is not fundamental for
the basic arguments yielding the consistency and the CLT in Theorems~\ref{theo:uniform-consistency}
and~\ref{theo:PollardWithPen}, respectively. Here we provide results formulated in the more general form where
$\hbbeta_n(\param)$ is a near minimizer of $\pencontrast(\cdot,\param)$ for all $\param\in\paramset$.
Moreover the true parameter $\bbeta$ itself is defined as a map on $\paramset$, with  $\bbeta(\paramset)$
defined as the minimizer of $\pencontrastlim(\cdot,\param)$ for all $\param\in\paramset$. We refer this general situation as  
\emph{pathwise M-estimation}.

\subsection{Uniform consistency}
Theorem~\ref{theo:uniform-consistency} is obtained by applying the following general result on pathwise M-estimators. 

\begin{prop} \label{prop:uniform-consistency}
Let $\bPhi$ be a subset of a metric space endowed with the metric $d$ and $\paramset$ be any set. 
Let $\Lambda$ be a real-valued function defined on $\bPhi\times\paramset$,
$\{\pencontrast(\bphi,\param),\;\bphi\in\bPhi,\param\in\paramset\}$ be a sequence of real-valued processes, $\bbeta$ be a
$\paramset\to\bPhi$ map and $\{\hbbeta_n(\param),\;\param\in\paramset\}$ be a sequence of $\bPhi$-valued processes 
such that  
  \begin{enumerate}[(i)]
  \item\label{it:consUnfiLowBoundParam} 
$\displaystyle\sup_{\bphi\in\bPhi}\sup_{\param\in\paramset}
\left\{\Lambda(\bphi;\param)-\pencontrast(\bphi,\param)\right\}_+\cpout 0$;
  \item\label{it:SimpleConvParam} 
$\displaystyle\sup_{\param\in\paramset}\left|\pencontrast(\bbeta(\param),\param)-\Lambda(\bbeta(\param),\param)\right|\cpout0$;
  \item \label{it:IsolMinParam} For all $\epsilon>0$,
    $$
\inf_{\param\in\paramset}\left[\inf\{\Lambda(\bphi;\param)~:~\bphi\in\bPhi,\;d(\bphi,\bbeta(\param))\geq\epsilon\}-\Lambda(\bbeta(\param),\param)\right]>0\;; 
$$
  \item \label{it:DefMestimaParam} 
    $\displaystyle\sup_{\param\in\paramset}\displaystyle\left\{\pencontrast(\hbbeta_n(\param),\param)-\pencontrast(\bbeta(\param),\param)\right\}_+\cpout 0$.
  \end{enumerate}
Then, $\hbbeta_n(\param)$ converges to $\bbeta(\param)$ uniformly in $\param\in\paramset$, in $P^\ast$--probability, that is,
\begin{equation}
  \label{eq:ConvUnifMestPenParam}
  \sup_{\param\in\paramset}d(\hbbeta_n(\param),\bbeta(\param))\cpout 0\;.
\end{equation}
\end{prop}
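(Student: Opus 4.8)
The plan is to adapt the classical argmin/consistency argument (as in~\cite[Theorem~5.7]{vandervaart:1998}) to the pathwise setting, being careful to keep all the suprema over $\param\in\paramset$ inside the probabilistic bounds so that a \emph{single} choice of $n$ works uniformly in $\param$. Fix $\epsilon>0$. By Condition~(\ref{it:IsolMinParam}) there is a constant $\delta=\delta(\epsilon)>0$ such that
\begin{equation}
  \label{eq:propConsProofGap}
  \Lambda(\bphi,\param)-\Lambda(\bbeta(\param),\param)\geq\delta\quad\text{whenever }d(\bphi,\bbeta(\param))\geq\epsilon,\ \param\in\paramset\;.
\end{equation}
The key observation is that on the event $\{d(\hbbeta_n(\param),\bbeta(\param))\geq\epsilon\}$ one can sandwich the ``limit gap'' $\Lambda(\hbbeta_n(\param),\param)-\Lambda(\bbeta(\param),\param)$ between the three error quantities controlled by Conditions~(\ref{it:consUnfiLowBoundParam}), (\ref{it:SimpleConvParam}) and~(\ref{it:DefMestimaParam}).

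Concretely, for any $\param\in\paramset$ write
\begin{equation}
  \label{eq:propConsProofDecomp}
  \Lambda(\hbbeta_n(\param),\param)-\Lambda(\bbeta(\param),\param)
  = A_n(\param)+B_n(\param)+C_n(\param)\;,
\end{equation}
where $A_n(\param)=\Lambda(\hbbeta_n(\param),\param)-\pencontrast(\hbbeta_n(\param),\param)$, $B_n(\param)=\pencontrast(\hbbeta_n(\param),\param)-\pencontrast(\bbeta(\param),\param)$, and $C_n(\param)=\pencontrast(\bbeta(\param),\param)-\Lambda(\bbeta(\param),\param)$. Then $A_n(\param)\leq\sup_{\bphi}\sup_{\param}\{\Lambda(\bphi,\param)-\pencontrast(\bphi,\param)\}_+=:R_n^{(1)}$, which is $o_{P^*}(1)$ by~(\ref{it:consUnfiLowBoundParam}); $B_n(\param)\leq\sup_{\param}\{\pencontrast(\hbbeta_n(\param),\param)-\pencontrast(\bbeta(\param),\param)\}_+=:R_n^{(2)}$, which is $o_{P^*}(1)$ by~(\ref{it:DefMestimaParam}); and $C_n(\param)\leq\sup_{\param}|\pencontrast(\bbeta(\param),\param)-\Lambda(\bbeta(\param),\param)|=:R_n^{(3)}$, which is $o_{P^*}(1)$ by~(\ref{it:SimpleConvParam}). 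Setting $R_n=R_n^{(1)}+R_n^{(2)}+R_n^{(3)}=o_{P^*}(1)$, we obtain from~(\ref{eq:propConsProofDecomp}) that $\Lambda(\hbbeta_n(\param),\param)-\Lambda(\bbeta(\param),\param)\leq R_n$ for \emph{all} $\param\in\paramset$ simultaneously, with the bound $R_n$ not depending on $\param$.

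Now combine this with the separation~(\ref{eq:propConsProofGap}): if $\sup_{\param\in\paramset}d(\hbbeta_n(\param),\bbeta(\param))\geq\epsilon$, then there exists $\param_0\in\paramset$ with $d(\hbbeta_n(\param_0),\bbeta(\param_0))\geq\epsilon$ (or a value arbitrarily close; one passes to such a $\param_0$), whence $\delta\leq\Lambda(\hbbeta_n(\param_0),\param_0)-\Lambda(\bbeta(\param_0),\param_0)\leq R_n$. Therefore the event $\{\sup_{\param\in\paramset}d(\hbbeta_n(\param),\bbeta(\param))\geq\epsilon\}$ is contained in $\{R_n\geq\delta\}$, and taking outer probabilities gives $P^*(\sup_{\param}d(\hbbeta_n(\param),\bbeta(\param))\geq\epsilon)\leq P^*(R_n\geq\delta)\to0$, which is exactly~(\ref{eq:ConvUnifMestPenParam}).

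The main obstacle is purely measurability bookkeeping rather than anything analytic: the supremum over $\param$ of $d(\hbbeta_n(\param),\bbeta(\param))$ need not be measurable, and likewise for the quantities in Conditions~(\ref{it:consUnfiLowBoundParam})--(\ref{it:DefMestimaParam}), so one must work throughout with $P^*$ and verify that the set inclusion $\{\sup_{\param}d(\hbbeta_n(\param),\bbeta(\param))\geq\epsilon\}\subseteq\{R_n\geq\delta\}$ holds \emph{pointwise on $\Omega$} (not just up to null sets), which it does by the deterministic chain of inequalities above; outer probability is then monotone under this inclusion. A minor subtlety is that the inf in~(\ref{eq:propConsProofGap}) may not be attained, so strictly one extracts $\param_0$ with $d(\hbbeta_n(\param_0),\bbeta(\param_0))>\epsilon-\eta$ for small $\eta$ and applies the separation bound at level $\epsilon-\eta$; choosing $\eta$ at the outset absorbs this, so it causes no real difficulty.
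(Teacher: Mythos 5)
Your proof is correct and follows essentially the same route as the paper's: the same three-term telescoping decomposition bounded by the quantities in (i), (iv), (ii), combined with the separation constant from (iii) and a pointwise event inclusion handled with outer probability (the paper resolves the non-attainment of the supremum by working at level $\epsilon/2$, which is the same device as your $\epsilon-\eta$ adjustment). The only implicit step is the finite subadditivity of $P^*$ needed to conclude that the sum $R_n$ of three $o_{P^*}(1)$ terms is $o_{P^*}(1)$ (the paper makes this explicit by splitting at threshold $\alpha/3$), but this is immediate.
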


\subsection{Functional central limit theorem}

We now extend the setting of  \cite{POL85} to pathwise M-estimation. First we 
obtain the $\sqrt{n}$-rate of convergence in the sup norm; second we apply Theorem~\ref{theo:argmaxParam}
to obtain a functional CLT for pathwise M-estimators. Theorem~\ref{theo:PollardWithPen} is a direct application of this
result  in the context of penalized M-estimation.

\begin{prop}  \label{prop:vitesse}
Let $\bPhi$ be a subset of a metric space endowed with the metric $d$ and $\paramset$ be any set. 
Let
$\{\pencontrast(\bphi,\param),\;\bphi\in\bPhi,\param\in\paramset\}$ be a sequence of real-valued processes, $\bbeta$ be a
$\paramset\to\bPhi$ map and $\{\hbbeta_n(\param),\;\param\in\paramset\}$ be a sequence of $\bPhi$-valued processes 
such that
\begin{equation}
\label{eq:DefMestimaParamVitesse} 
\sup_{\param\in\paramset}
\left\{\pencontrast(\hbbeta_n(\param),\param)-\pencontrast(\bbeta(\param),\param)\right\}_+
=O_{P^*}\left(n^{-1}\right)\;,   
\end{equation}
and the uniform $P^*$-consistency~(\ref{eq:ConvUnifMestPenParam}) holds.
Assume that we have the following decomposition of the contrast process, 
\begin{equation}
\label{eq:ContrasteDecompVitesse}
\pencontrast(\bphi,\param)-\pencontrast(\bbeta(\param),\param)= 
G_n(\bphi,\param)+
H(\bphi,\param)+
d(\bphi,\bbeta(\param))\;R_n(\bphi,\param) \;, 
\end{equation}
where $G_n$, $H$ and $R_n$ satisfy
\begin{enumerate}[(i)]
  \item\label{it:Gncond} $\{G_n(\bphi,\param),\;\bphi\in\bPhi,\param\in\paramset\}$ is  a sequence of real-valued processes
    such that
\begin{equation}
\label{eq:Gncond}
\sup_{\bphi\in\bPhi}\sup_{\param\in\paramset}
\frac{n\;\left|G_n(\bphi,\param)\right|}{1+\sqrt{n}\,d(\bphi,\bbeta(\param))}=O_{P^*}(1)\;;
\end{equation}
\item\label{it:Hcond} $H$ is a real-valued function defined on $\bPhi\times\paramset$ such that there exists $\epsilon>0$ for
    which 
\begin{equation}
\label{eq:Hcond}
\inf_{\param\in\paramset}\inf \left\{\frac{H(\bphi,\param)}{d^2(\bphi,\bbeta(\param))}~:~\bphi\in\bPhi,
  \;d(\bphi,\bbeta(\param))\leq\epsilon \right\} >0\;;
\end{equation}
\item \label{it:Rncond}  $\{R_n(\bphi,\param),\;\bphi\in\bPhi,\param\in\paramset\}$ is  a sequence of real-valued processes
    such that, for any positive random sequence $(r_n)$ converging to 0 in $P^*$--probability, 
\begin{equation}
\label{eq:Rncond}
\sup_{\param\in\paramset}\sup\left\{\left|R_n(\bphi,\param)\right|~;~\bphi\in\bPhi,
  \;d(\bphi,\bbeta(\param))\leq r_n \right\} = o_{P^*}(r_n)+O_{P^*}(n^{-1/2}) \; .
\end{equation}
\end{enumerate}
Then, $\hbbeta_n(\param)$ converges to $\bbeta(\param)$ uniformly in $\param\in\paramset$, in $P^\ast$--probability, with rate at least
$\sqrt{n}$, that is,
\begin{equation}
  \label{eq:ConvUnifMestPenParamVitesse}
  \sqrt{n}\;\sup_{\param\in\paramset}d(\hbbeta_n(\param),\bbeta(\param)) = O_{P^*}(1) \;.
\end{equation}
\end{prop}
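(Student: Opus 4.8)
The plan is to run the classical rate-of-convergence argument for M-estimators (in the spirit of \cite{vandervaart:wellner:1996}), but carried out uniformly over $\param\in\paramset$ and phrased throughout with outer probabilities. Write $\delta_n=\sup_{\param\in\paramset}d(\hbbeta_n(\param),\bbeta(\param))$. By the assumed uniform $P^*$-consistency~(\ref{eq:ConvUnifMestPenParam}) we have $\delta_n\cpout 0$, and since only an $O_{P^*}$ bound is claimed it suffices to argue on the events $\{\delta_n\leq\epsilon\}$, where $\epsilon>0$ is the radius furnished by~(\ref{eq:Hcond}), and to show there that $\sqrt n\,\delta_n=O_{P^*}(1)$. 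Let $c>0$ be the infimum in~(\ref{eq:Hcond}); let $A_n=\sup_{\bphi\in\bPhi,\param\in\paramset}n|G_n(\bphi,\param)|/(1+\sqrt n\,d(\bphi,\bbeta(\param)))$, which is $O_{P^*}(1)$ by~(\ref{eq:Gncond}); and let $u_n=\sup_{\param\in\paramset}\{\pencontrast(\hbbeta_n(\param),\param)-\pencontrast(\bbeta(\param),\param)\}_+$, which is $O_{P^*}(n^{-1})$ by~(\ref{eq:DefMestimaParamVitesse}).

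First I would substitute $\bphi=\hbbeta_n(\param)$ into the decomposition~(\ref{eq:ContrasteDecompVitesse}). Since $\pencontrast(\hbbeta_n(\param),\param)-\pencontrast(\bbeta(\param),\param)\leq u_n$ for every $\param$, writing $d_n(\param)=d(\hbbeta_n(\param),\bbeta(\param))$ this yields, for every $\param$,
\[
H(\hbbeta_n(\param),\param)\;\leq\; u_n+|G_n(\hbbeta_n(\param),\param)|+d_n(\param)\,|R_n(\hbbeta_n(\param),\param)|\;.
\]
On $\{\delta_n\leq\epsilon\}$ the left-hand side is at least $c\,d_n^2(\param)$ by~(\ref{eq:Hcond}); the $G_n$ term is at most $A_n n^{-1}(1+\sqrt n\,d_n(\param))$ by the definition of $A_n$; and, applying~(\ref{eq:Rncond}) with the random radius $r_n=\delta_n$ (legitimate since $\delta_n\cpout 0$), the remainder obeys $\sup_{\param\in\paramset}|R_n(\hbbeta_n(\param),\param)|\leq\tilde B_n$ with $\tilde B_n=o_{P^*}(\delta_n)+O_{P^*}(n^{-1/2})$.

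Plugging these three bounds in and then taking $\sup_{\param\in\paramset}$ — using $\sup_\param d_n^2(\param)=\delta_n^2$, $\sup_\param d_n(\param)=\delta_n$, and $\tilde B_n\geq 0$ — leaves
\[
c\,\delta_n^2\;\leq\; u_n+\frac{A_n}{n}+\frac{A_n}{\sqrt n}\,\delta_n+\delta_n\tilde B_n
\;=\;O_{P^*}(n^{-1})+O_{P^*}(n^{-1/2})\,\delta_n+o_{P^*}(\delta_n^2)\;.
\]
On events of probability tending to $1$ the term $o_{P^*}(\delta_n^2)$ is at most $\tfrac c2\delta_n^2$, hence $\tfrac c2\delta_n^2\leq O_{P^*}(n^{-1})+O_{P^*}(n^{-1/2})\delta_n$; solving this quadratic inequality for $\delta_n\geq 0$ (all coefficients being nonnegative) then gives $\delta_n=O_{P^*}(n^{-1/2})$, that is~(\ref{eq:ConvUnifMestPenParamVitesse}).

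The arithmetic is routine; what needs care is the measurability bookkeeping. The quantity $\delta_n$ and the various suprema over $\param$ need not be measurable, so each inequality above, and each manipulation of $O_{P^*}$ and $o_{P^*}$ symbols, must be read in the outer-probability sense (the required calculus is standard, see \cite{vandervaart:wellner:1996}). The one genuinely new point in this direction is that~(\ref{eq:Rncond}) is invoked with the random radius $r_n=\delta_n$; this is allowed precisely because~(\ref{eq:Rncond}) is assumed for \emph{every} positive random sequence converging to $0$ in $P^*$-probability, and $\delta_n$ is such a sequence by~(\ref{eq:ConvUnifMestPenParam}). A peeling argument over the dyadic shells $\{2^{j-1}<\sqrt n\,d_n(\param)\leq 2^j\}$ is the textbook alternative and avoids having to choose a near-maximizing $\param$, but the direct computation above is shorter here because $A_n$ and $\tilde B_n$ already incorporate the supremum over $\param$.
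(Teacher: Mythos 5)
Your argument is correct and follows essentially the same route as the paper's proof: plug $\hbbeta_n(\param)$ into the decomposition~(\ref{eq:ContrasteDecompVitesse}), lower-bound $H$ by a multiple of $d^2$ on the consistency event, bound $G_n$ via~(\ref{eq:Gncond}), invoke~(\ref{eq:Rncond}) with the random radius $\delta_n=\sup_\param d(\hbbeta_n(\param),\bbeta(\param))$ (exactly as the paper does), and conclude from the resulting quadratic-type inequality. The only difference is cosmetic: you solve the inequality $\tfrac{c}{2}\delta_n^2\leq O_{P^*}(n^{-1})+O_{P^*}(n^{-1/2})\delta_n$ directly, whereas the paper works on the event $\{\sqrt{n}\,\delta_n>\delta\}$ and shows its outer probability is small for large $\delta$, which amounts to the same estimate.
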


Applying Proposition~\ref{prop:vitesse} and Theorem~\ref{theo:argmaxParam}, we get the following result.

\begin{theo}  \label{theo:CLTStochDiff}
Let  $\bPhi=\Rset^p$, $p\geq1$, and $\paramset$ be any set. 
Let
$\{\pencontrast(\bphi,\param),\;\bphi\in\bPhi,\param\in\paramset\}$ be a sequence of real-valued processes, $\bbeta$ be a
$\paramset\to\bPhi$ map and $\{\hbbeta_n(\param),\;\param\in\paramset\}$ be a sequence of $\bPhi$-valued processes 
such that
\begin{equation}
\label{eq:DefMestimaParamVitesseS} 
\sup_{\param\in\paramset}
\left\{\pencontrast(\hbbeta_n(\param),\param)-\pencontrast(\bbeta(\param),\param)\right\}_+
=o_{P^*}\left(n^{-1}\right)\;,   
\end{equation}
and the uniform $P^*$-consistency~(\ref{eq:ConvUnifMestPenParam}) holds.
Assume that the decomposition~(\ref{eq:ContrasteDecompVitesse}) of the contrast process holds
where $G_n$, $H$ and $R_n$ satisfy:
\begin{enumerate}[(i)]
  \item\label{it:GncondS} $\{G_n(\bphi,\param),\;\bphi\in\bPhi,\param\in\paramset\}$ is  a sequence of real-valued processes
    satisfying~(\ref{eq:Gncond});
\item\label{it:HcondS} $H$ is a real-valued function defined on $\bPhi\times\paramset$ and there exists a function $\Gamma$
  defined on $\paramset$ and taking values in the set of non-negative symmetric  
$p\times p$ matrices such that, denoting by $\lambda_{\min}(\Gamma(\param))$  and $\lambda_{\max}(\Gamma(\param))$ the
smallest and largest eigenvalues  of $\Gamma(\param)$, 
\begin{equation}
\label{eq:HcondS1}
0<\inf\{\lambda_{\min}(\Gamma(\param)),\;\param\in\paramset\}<\sup\{\lambda_{\max}(\Gamma(\param)),\;\param\in\paramset\}<\infty\;,
\end{equation}
and, as $\bphi\to\bbeta$ in $\ell^\infty(\paramset,\Rset^p)$,
\begin{equation}
\label{eq:HcondS2}
\left\|H(\bphi(\cdot),\cdot)-
  (\bphi-\bbeta)^T\Gamma(\bphi-\bbeta)\right\|_{\paramset}=o\left(\|\bphi-\bbeta\|^2_{\paramset}\right)\;;
\end{equation}
\item \label{it:RncondS}  $\{R_n(\bphi,\param),\;\bphi\in\bPhi,\param\in\paramset\}$ is  a sequence of real-valued processes
    such that, for any positive random sequence $(r_n)$ converging to 0 in $P^*$--probability, 
 \begin{equation}
\label{eq:RncondS}
\sup_{\param\in\paramset}\sup\left\{\left|R_n(\bphi,\param)\right|~;~\bphi\in\bPhi,
  \;d(\bphi,\bbeta(\param))\leq r_n \right\} = o_{P^*}(r_n)+o_{P^*}(n^{-1/2}) \; .
\end{equation}
\end{enumerate}
Let us further define 
\begin{equation}
  \label{eq:pencontrastnormLinPartDef}
  \widehat{G}_n(\bphi,\param)=nG_n\left(\bbeta(\param)+n^{-1/2}\bphi,\param\right )\;,
\end{equation}
and assume that there exists a real-valued process $\{G(\bphi,\param),\;\bphi\in\bPhi,\;\param\in\paramset\}$
such that, for any compact $K\subset\bPhi$, $G$ is tight in
$\ell^\infty(K\times\paramset,\Rset^p)$ and $\widehat{G}_n\wc G$ in
$\ell^\infty(K\times\paramset,\Rset^p)$. Define
\begin{equation}
  \label{eq:pencontrastlimDef}
  \pencontrastlim(\bphi,\param)=G\left(\bphi,\param\right )+
\bphi^T\Gamma(\param)\bphi\;,
\end{equation}
and assume that there exists a $\bPhi$-valued process 
$\{\hbbetadif(\param),\;\param\in\paramset\}$ such that
Conditions~(\ref{it:isolatedMinimumlimitCLT}) and~(\ref{it:limitTight}) in
Theorem~\ref{theo:argmaxParam} hold.
Then there is a version of $\hbbetadif$ in $\ell^\infty(\paramset,\Rset^p)$ and 
\begin{equation}
  \label{eq:CLTStochDiff}
\sqrt{n}(\hbbeta_n-\bbeta)\wc\hbbetadif  \;.
\end{equation}
\end{theo}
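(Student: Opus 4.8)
The plan is to deduce Theorem~\ref{theo:CLTStochDiff} by combining Proposition~\ref{prop:vitesse} (to get the $\sqrt n$-rate) with Theorem~\ref{theo:argmaxParam} in the case~(C-\ref{it:LinftyConvinRp}) (to upgrade the rate to a functional weak limit). First I would check that the hypotheses of Proposition~\ref{prop:vitesse} are met: condition~\eqref{eq:DefMestimaParamVitesseS} is stronger than~\eqref{eq:DefMestimaParamVitesse}, condition~\eqref{it:RncondS} implies~\eqref{it:Rncond}, and condition~\eqref{it:HcondS} --- in particular the uniform ellipticity~\eqref{eq:HcondS1} together with the quadratic approximation~\eqref{eq:HcondS2} --- implies the lower bound~\eqref{eq:Hcond} with $\epsilon$ small enough (shrinking $\epsilon$ absorbs the $o(\|\bphi-\bbeta\|^2)$ error into a fixed fraction of $\inf_\param\lambda_{\min}(\Gamma(\param))$). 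Hence Proposition~\ref{prop:vitesse} applies and yields $\sqrt n\,\sup_{\param\in\paramset}d(\hbbeta_n(\param),\bbeta(\param))=O_{P^*}(1)$.

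Next I would perform the standard localization/rescaling. Set $\hbbetadif_n(\param)=\sqrt n\,(\hbbeta_n(\param)-\bbeta(\param))$ and define the rescaled normalized contrast
\begin{equation}
\label{eq:rescaledContrast}
\pencontrastnorm(\bphi,\param)=n\left[\pencontrast\bigl(\bbeta(\param)+n^{-1/2}\bphi,\param\bigr)-\pencontrast(\bbeta(\param),\param)\right]\;.
\end{equation}
Plugging the decomposition~\eqref{eq:ContrasteDecompVitesse} into~\eqref{eq:rescaledContrast} gives, with $\widehat G_n$ as in~\eqref{eq:pencontrastnormLinPartDef},
\begin{equation*}
\pencontrastnorm(\bphi,\param)=\widehat G_n(\bphi,\param)+n\,H\bigl(\bbeta(\param)+n^{-1/2}\bphi,\param\bigr)+\sqrt n\,\|\bphi\|\,R_n\bigl(\bbeta(\param)+n^{-1/2}\bphi,\param\bigr)\;.
\end{equation*}
On any compact $K\subset\bPhi$: the first term converges weakly in $\ell^\infty(K\times\paramset,\Rset^p)$ to $G$ by assumption; the second term equals $\bphi^T\Gamma(\param)\bphi+o(1)$ uniformly on $K\times\paramset$ by~\eqref{eq:HcondS2} after rescaling (note $n^{-1/2}\bphi\to0$ uniformly); and the third term is $O_{P^*}(n^{-1/2})+o_{P^*}(1)$ uniformly by~\eqref{eq:RncondS} applied with $r_n$ a fixed-radius sequence. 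By Slutsky in $\ell^\infty(K\times\paramset,\Rset^p)$ and the continuous mapping theorem, $\pencontrastnorm\wc\pencontrastlim$ in $\ell^\infty(K\times\paramset,\Rset^p)$ with $\pencontrastlim$ given by~\eqref{eq:pencontrastlimDef}, and $\pencontrastlim$ is tight there since $G$ is tight and the quadratic term is continuous. This verifies Condition~(\ref{it:LinftyConvContrast}) of Theorem~\ref{theo:argmaxParam}.

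It then remains to check the other four conditions of Theorem~\ref{theo:argmaxParam} for the rescaled objects. Condition~(\ref{it:isolatedMinimumlimitCLT}) and Condition~(\ref{it:limitTight}) are assumed directly in the statement. Condition~(\ref{it:UnifTight}), the asymptotic tightness of $\hbbetadif_n$ in $\bPhi=\Rset^p$, is exactly the conclusion~\eqref{eq:ConvUnifMestPenParamVitesse} of Proposition~\ref{prop:vitesse} (boundedness in probability in a finite-dimensional space is tightness, taking $K$ a large ball). Condition~(\ref{it:DefArgminFalCLT}), that $\hbbetadif_n$ approximately minimizes $\pencontrastnorm$, follows from~\eqref{eq:DefMestimaParamVitesseS}: multiplying by $n$ and using that $\inf_{\bphi}\pencontrastnorm(\bphi,\param)=n[\inf_{\bphi}\pencontrast(\bphi,\param)-\pencontrast(\bbeta(\param),\param)]$, the $o_{P^*}(n^{-1})$ bound becomes $o_{P^*}(1)$. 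Applying Theorem~\ref{theo:argmaxParam} in case~(C-\ref{it:LinftyConvinRp}) with $\paramset$ the given index set then produces a version of $\hbbetadif$ in $\ell^\infty(\paramset,\Rset^p)$ and gives $\hbbetadif_n\wc\hbbetadif$, i.e.~\eqref{eq:CLTStochDiff}.

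The main obstacle is the uniformity in the rescaling argument for the second paragraph: one must be careful that the $o(\|\bphi-\bbeta\|^2_\paramset)$ in~\eqref{eq:HcondS2} and the $o_{P^*}(r_n)+o_{P^*}(n^{-1/2})$ in~\eqref{eq:RncondS} genuinely translate into uniform-on-$K\times\paramset$ errors after substituting $\bphi\mapsto\bbeta(\param)+n^{-1/2}\bphi$; this requires applying~\eqref{eq:HcondS2} along the deterministic sequence of functions $\param\mapsto n^{-1/2}\bphi(\param)$ (whose sup norm tends to $0$) rather than pointwise in $\param$, and applying~\eqref{eq:RncondS} with the constant-order sequence $r_n\equiv\sup_K\|\bphi\|/\sqrt n\cdot\sqrt n$, i.e.\ a bounded radius, which is the one place the precise form of the hypotheses must be matched carefully. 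Everything else is bookkeeping: checking hypothesis inclusions and feeding the rescaled quantities into the two results already proved.
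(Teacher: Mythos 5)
Your proposal follows the paper's proof essentially verbatim: rescale by setting $\hbbetadif_n=\sqrt{n}(\hbbeta_n-\bbeta)$ and $\pencontrastnorm(\bphi,\param)=n[\pencontrast(\bbeta(\param)+n^{-1/2}\bphi,\param)-\pencontrast(\bbeta(\param),\param)]$, use the decomposition~(\ref{eq:ContrasteDecompVitesse}) to verify Condition~(\ref{it:LinftyConvContrast}) of Theorem~\ref{theo:argmaxParam}, take Conditions~(\ref{it:isolatedMinimumlimitCLT}) and~(\ref{it:limitTight}) by assumption, get Condition~(\ref{it:UnifTight}) from Proposition~\ref{prop:vitesse} (after noting, as the paper does in a remark, that~(\ref{eq:HcondS1})--(\ref{eq:HcondS2}) imply~(\ref{eq:Hcond}) and~(\ref{eq:RncondS}) implies~(\ref{eq:Rncond})), and get Condition~(\ref{it:DefArgminFalCLT}) from~(\ref{eq:DefMestimaParamVitesseS}). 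This is correct and is the same route as the paper.

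One slip, precisely at the point you flag as delicate: to bound the remainder term $\sqrt{n}\|\bphi\|\,R_n(\bbeta(\param)+n^{-1/2}\bphi,\param)$ uniformly on $K\times\paramset$, you must apply~(\ref{eq:RncondS}) with the vanishing radius $r_n=n^{-1/2}\sup_{\bphi\in K}\|\bphi\|$, not with a ``fixed-radius'' or ``bounded radius'' sequence as you write. A bounded radius does not satisfy the hypothesis $r_n\cpout 0$ of~(\ref{eq:RncondS}), and even formally it would only give $o_{P^*}(1)$ for $|R_n|$, hence $o_{P^*}(\sqrt{n})$ after multiplying by $\sqrt{n}\|\bphi\|$, which is useless. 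With the correct choice $r_n=n^{-1/2}\sup_{K}\|\bphi\|$ one gets $\sup|R_n|=o_{P^*}(n^{-1/2})$, so the third term is $o_{P^*}(1)$ uniformly, which is what the paper uses; the rest of your argument then goes through unchanged.
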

\begin{remark}
Observe that Eq.~(\ref{eq:DefMestimaParamVitesseS}) is a strengthened version of~(\ref{eq:ConvUnifMestPenParamVitesse})
and that~(\ref{eq:HcondS1}) and~(\ref{eq:HcondS2}) imply~(\ref{eq:Hcond}). Hence
Conditions~(\ref{it:GncondS})--(\ref{it:RncondS}) in Theorem~\ref{theo:CLTStochDiff} imply 
Conditions~(\ref{it:Gncond})--(\ref{it:Rncond}) in Proposition~\ref{prop:vitesse}.
\end{remark}

\section{Examples}
\label{sec:OtherExamples}

The uniform consistency and a functional central limit theorem for the lasso regularization path are respectively given 
in Theorems~\ref{theo:lassoConsistence} and~\ref{theo:lassoCLT}. Theorems~\ref{theo:ConvexConsistence}
and~\ref{theo:PollardWithPen} allow many extensions, some examples of which are given in this section. 
In  \cite{POL85}, a wide variety of models and functions $g$ are shown to satisfy Conditions
(P-\ref{it:Pollardiid})--(P-\ref{it:SDC}). These conditions apply for the general linear model (GLM) as this model
satisfies the pointwise assumptions of  \cite[Section~4]{POL85} (provided some moment conditions). They also apply for the
least absolute deviation (LAD) criterion, see Example~8 in \cite[Section~6]{POL85} (provided again some moment conditions on
the model). We briefly write the corresponding results in these two cases as examples of applications of
Theorem~\ref{theo:PollardWithPen}. Uniform consistencies for both examples are obtained as applications of
Theorem~\ref{theo:ConvexConsistence}, since in these cases $M_n$ is convex. 
For these two examples, we consider the $\ell^1$ and $\ell^2$ penalties. They fit the conditions of
Theorem~\ref{theo:PollardWithPen} as they satisfy~(\ref{eq:condJn}) and~(\ref{eq:penAsymp}) by
Lemma~\ref{lem:KFpenalties}. Observe however that the function $J_\infty$ in Lemma~\ref{lem:KFpenalties} depends on the chosen
penalty and thus so does the limit $\hbbetadif$ in~(\ref{eq:PollardCLTStochDiff}).
We conclude this section with a discussion on the Akaike information criterion (AIC), which corresponds to a $\ell^0$
penalty. 

\subsection{$\ell^1$--penalized GLM}
Consider a canonical exponential family of density
$$
p(y|\theta)=h(y)\exp\{y\theta-b(\theta)\} \; ,
$$
with respect to a dominating measure $\mu$. The function $b$, sometimes called the log-repartition function, is given by 
$$
b(\theta)=\log \int h(y) \exp\{y\theta\}\mu(\rmd y) \; ,
$$
and thus is strictly convex and infinitely differentiable. In a GLM, one observes a sequence of
i.i.d. $\Rset\times\Rset^p$-valued r.v.'s $(y_k,\bx_k),\;k=1,\dots,n$, where $y_k$ have conditional density
$p(\cdot|\bx_k^T\bbeta)$, given $\bx_k$, with $\bbeta\in\Rset^p$ denoting the unknown parameter of interest.  In this context,  the
non-penalized contrast process is given by the negated log-likelihood
$$
M_n(\bphi) = n^{-1} \sum_{k=1}^n g((\bx_k,y_k),\bphi) \; ,
$$
where $g((\bx,y),\bphi)=-y\bx^T\bphi + b(\bx^T\bphi)$. Using that $g$ is convex and
smooth, and assuming some appropriate moment conditions on $\bx_1$  for obtaining Pollard's
conditions~(P-\ref{it:Pollardiid})--(P-\ref{it:SDC}),  we get the uniform consistency and a 
functional CLT on the regularization path $\hbbeta_n(\param)$ defined as the minimizer of~(\ref{eq:DefPenContrast}) with
$J_n(\bphi)=n^{-1/2}\sum_{i=1}^p|\phi_i|$ (this is the $\ell^1$ penalty $J^{(1)}_n$ defined in~(\ref{eq:penaltiesDef})). In
particular, for any $L>0$, 
$$
\sqrt{n}(\hbbeta_n-\bbeta)\wc\hbbetadif  \; \textrm{ in } \ell^\infty([0,L],\Rset^p) \;,
$$
where the limit $\hbbetadif$ is defined as in the lasso case as the minimizer of~(\ref{eq:ContrasteLimite}) with 
$C=\PE[b''(\bx_1^T\bbeta)\bx_1\bx_1^T]$ (assumed positive-definite) and $U\sim\mathcal{N}(0,C)$.
The numerical computation of $\hbbeta_n(\param)$ can be processed as proposed in~\cite{PAR06}. 

\subsection{$\ell^1$ and $\ell^2$--penalized LAD} Given  a sequence of
$\Rset\times\Rset^p$-valued r.v.'s $(y_k,\bx_k),\;k=1,\dots,n$, the LAD criterion is defined as
$$
M_n(\bphi) = n^{-1} \sum_{k=1}^n |y_k-\bx_k^T\bphi| \; .
$$
It can be used to estimate the parameter $\bbeta\in\Rset^p$ of a linear regression model $y_k=\bx_k^T\bbeta+\varepsilon_k$, with
$(\varepsilon_k)$ and $(\bx_k)$ two independent sequence of i.i.d. r.v.'s.
This contrast process is an alternative to the mean square criterion, resulting in an estimator less sensitive to the
presence of outliers (for $\bx_k=1$, the minimizer of $M_n$ is 
the sample median). In contrast to the previous case, the contrast is not smooth, since the first derivative is
discontinuous. However, as shown \textit{e.g.} in \cite{POL85}, the minimizer of this contrast is asymptotically normal, provided some
moment conditions and that
$$
G(\bphi)=\PE\left[\left|\varepsilon_1+\bx_1^T(\bbeta-\bphi)\right|\right]
$$
has a non-singular second derivative at $\bphi=\bbeta$. 
Observe that
$$
G(\bphi)=\PE\left[\bx_1^T(\bbeta-\bphi)+2\int_{0}^{\bx_1^T(\bphi-\bbeta)}F(s)\;\rmd s\right]\;,
$$ 
where $F$ denotes the cumulative distribution function of $\varepsilon_1$. Thus,
if $\varepsilon_1$ is distributed from a continuous density $f$, the 
second derivative of $G$ at $\bbeta$ is $\Gamma=2f(0)\PE\left[\bx_1\bx_1^T\right]$.
Because the LAD criterion uses the $\ell^1$ error function, the $\ell^2$
penalty $J_n(\bphi)=n^{-1/2}\sum_{i=1}^p\phi_i^2$ could seem more reasonable. On the contrary
Theorem~\ref{theo:PollardWithPen} suggests that using an $\ell^1$ error 
function contrast does not modify the asymptotic distribution of the regularization path, only the choice of the penalty
does. In other words, the regularization path of the $\ell^1$ and $\ell^2$--penalized LAD  has similar asymptotic distributions as 
the lasso and the ridge regression, respectively.
Let us now precise the limit distribution of the regularization path $\hbbeta_n(\param)$ defined as the minimizer of~(\ref{eq:DefPenContrast}) with
$J_n(\bphi)=n^{-1/2}\sum_{i=1}^p|\phi_i|$ and $J_n(\bphi)=n^{-1/2}\sum_{i=1}^p\phi_i^2$ respectively
(these are the $\ell^1$ and $\ell^2$ penalty $J^{(1)}_n$ and  $J^{(2)}_n$ defined in~(\ref{eq:penaltiesDef})). 
Under appropriate moment conditions on $(\varepsilon_1,\bx_1)$ implying Pollard's
conditions~(P-\ref{it:Pollardiid})--(P-\ref{it:SDC}) (in particular $\PE[\sgn(\varepsilon_1)]=0$, $\PE[\|\bx_1\|^2]<\infty$
so that $\PE[\Delta]=0$,  $\PE[\|\Delta\|^2]<\infty$ and $G$ is minimized at $\bphi=\bbeta$), one has, for any $L>0$,  
$$
\sqrt{n}(\hbbeta_n-\bbeta)\wc\hbbetadif  \; \textrm{ in } \ell^\infty([0,L],\Rset^p) \;,
$$
where the limit $\hbbetadif$ is defined as the minimizer of~(\ref{eq:pencontrastlimDefPolard}) where $\Gamma$ is the
(non-singular) second derivative of $G$ at $\bphi=\bbeta$,  $W\sim\mathcal{N}(0,\PE[\bx_1\bx_1^T])$ and 
$J_\infty$ depends on the penalty. Namely, for the $\ell^1$ penalty, one has $J_\infty=J_\infty^{(1)}$ and 
for the $\ell^2$ penalty, one has $J_\infty=J_\infty^{(2)}$, where $J_\infty^{(\gamma)}$ is defined by~(\ref{eq:JinftyKF}). 

\subsection{Akaike information criterion and the $\ell^0$ penalty}
Consider a parametric family of densities $\{p_{\bphi}\,,\,\bphi\in\bPhi\}$ defined on $\Xset^n$ for modelling the
distribution of the observations $\xi_1,\dots,\xi_n$.
The Akaike  information criterion (AIC) was proposed in \cite{akaike:1973} as the negated log-likelihood
criterion penalized by the dimension of the parameter. It can be defined (up to a multiplicative factor which does not change
its minimizer) as  
$$
\mathrm{AIC}(\bphi)=\pencontrast(\bphi,1)\;,
$$ 
where $\pencontrast$ is defined by~(\ref{eq:DefPenContrast}) with $M_n(\bphi)=-n^{-1}\log p_{\bphi}(\xi_1,\dots,\xi_n)$ and 
$$
J_n^{(0)}(\bphi)=n^{-1}\#\left\{k\,:\,\phi_k\neq0\right\} \;,
$$
where $\# A$ denotes the cardinality of the set $A$. 
We note that it corresponds to a $\ell^0$ penalty, that is, to $\gamma=0$ in~(\ref{eq:penaltiesDef}) although this case is
not considered in \cite{KNI00}.  It is not usually assumed that $\bPhi$ is finite-dimensional in the
presentation of the AIC. However, in practice,  
the minimization of $\mathrm{AIC}(\bphi)$ requires numerically minimizing $M_n(\bphi)$ for each possible submodel, which
corresponds to a given value of the sequence 
$(\1(\phi_k\neq0) )_{k\geq1}$. This makes sense only in a finite-dimensional setting,  $\bPhi\subseteq\Rset^p$, with $p$
not too large (say $p\leq 15$) since $2^p$ numerical minimizations of $M_n$ are then necessary.

Observe that, for any fixed $\bphi\in\Rset^p$ we have $nJ_n^{(0)}(\bphi)\leq p$ and, for any $\bbeta\in\Rset^p$ and any
$r>0$, we have, for $n$ large enough,
$$
nJ_n^{(0)}(\bbeta+n^{-1/2}\bphi)-nJ_n^{(0)}(\bbeta)=J_\infty^{(0)}(\bphi)
\quad\text{for all}\quad\|\bphi\|\leq r\;,
$$
where
$$
J_\infty^{(0)}(\bphi)=\sum_{k=1}^p\1(\beta_k=0\text{ and }\phi_k\neq0)\;.
$$
It follows that the contrast $J_n^{(0)}$ satisfies the assumptions~(\ref{eq:condJn}) and~(\ref{eq:penAsymp}) in
Theorem~\ref{theo:PollardWithPen} and thus we may apply this result to obtain the limit behavior of the minimizer of the AIC 
in the i.i.d. case, that is, when 
$$
M_n(\bphi)=P_n g(\cdot,\bphi)\quad\text{with}\quad g(x,\bphi)=-\log p_{\bphi}(x) \;.
$$
Here, $p_{\bphi}$ denotes the density of one observation in the parametric family $\{p_{\bphi}\,,\,\bphi\in\bPhi\}$. 
Suppose that this model satisfy Assumption~\ref{assum:uniform-consistency} and
the Pollard's conditions (P-\ref{it:Pollardiid})--(P-\ref{it:SDC}) with $\bbeta$ denoting the true parameter and with
$\Gamma=P(\Delta\Delta^T)$ equal to the Fisher information matrix at parameter $\bbeta$.
We may thus apply
Theorem~\ref{theo:uniform-consistency} and Theorem~\ref{theo:PollardWithPen} successively to the minimizing sequence 
$$
\hbbeta_n=\argmin_{\bphi\in\bPhi}\mathrm{AIC}(\bphi)\;.
$$
We obtain $\hbbeta_n\wc\bbeta$ and $\sqrt{n}(\hbbeta_n-\bbeta)\wc\hbbetadif$, where $\hbbetadif$ is defined as the minimizer 
of~(\ref{eq:pencontrastlimDefPolard}) with $J_\infty=J_\infty^{(0)}$ and $\param=1$.
Observe that, in the limit penalty $J_\infty^{(0)}$, only the vanishing coordinates of the true parameter $\bbeta$ are penalized. 
In other words, for a coordinate $k$ such that $\beta_k=0$ and only for such a coordinate, we have $\hbbetadif_k=0$ with
positive probability. This property highlights the (well known) ability of the AIC criterion to correctly select the correct
model. 

Finally we note that the AIC can easily be extended to a collection of contrast 
$\pencontrast(\bphi,\param)$, where $\param$ is a positive penalty weight (the case $\param=1$ corresponding to the standard AIC). 
The solution path $\hbbeta_n(\param)$ with minimizes $\pencontrast(\bphi,\param)$ for all $\param>0$ is not more difficult to 
compute than $\hbbeta_n(1)$ once one has minimized $M_n(\bphi)$ for the $2^p$ possible submodels.
One easily sees that the solution path is piece-wise constant with multiple solutions at the discontinuities.
Multiple solutions for a finite set of penalty weight $\param$ are also present 
in the limit contrast~(\ref{eq:pencontrastlimDefPolard}) with $J_\infty=J_\infty^{(0)}$.
One can show that there exists almost surely a unique minimizer $\hbbetadif(\param)$ of the limit contrast
$\pencontrastlim(\cdot,\param)$ for all $\param\in\paramset$ if and only if the closure of the set $\paramset$ has zero
Lebesgue measure. In the latter case, one also has that $\hbbetadif$ satisfies Condition~(\ref{it:isolatedMinimumlimitCLT}) in 
Theorem~\ref{theo:argmaxParam}. This non-uniqueness problem of the minimizer of the limit contrast
did not appear in the previous examples because for both $\ell^1$ and $\ell^2$ penalties, the limit contrast was strictly 
convex. This is non-longer true for the $\ell^0$ penalty so that the convergence
$\sqrt{n}(\hbbeta_n-\bbeta)\wc\hbbetadif$ cannot hold in a functional sense in this case. 
Nevertheless, the convergence continues
to hold  for the $\ell^0$ penalty in the sense of the finite-dimensional convergence because, for a given finite number of
penalty weights $\param$, there is a unique minimizer $\hbbetadif(\param)$ of $\pencontrastlim(\cdot,\param)$ almost
surely. 

\section{Conclusion}
\label{sec:conclusion}
We extended the works of  Knight and Fu (2000) in several ways by showing that the asymptotic distribution 
that they exhibited for the penalized least squared continues to hold 1) for the solution path in a functional sense 2) for a 
wide variety of contrasts extending the least squares case. We provided several examples of interest.
An interesting feature of penalized estimation is that the form of the limit distribution of the regularization path
only depends on the penalty since for any \emph{standard} contrast, it is given as the path
minimizing~(\ref{eq:pencontrastlimDefPolard}) with $J_\infty$ only depending on the penalty. 
The marginal limit distribution is discussed in   Knight and Fu (2000) for $\ell^\gamma$ penalties with $\gamma>0$.
As pointed out in this reference, a particular feature of $\ell^1$ penalty is that the limit distribution is compatible with
model selection properties but introduce an additional bias on the non-vanishing components.
We have shown that the model selection property is preserved by the $\ell^0$ penalty, without introducing an additional  bias
on the non-vanishing components. However, the  $\ell^0$ penalty is much less numerically tractable for a large dimension of
the parameter space and the central limit theorem on the solution path only holds in a finite-dimensional sense. 
This latter result were derived in Section~\ref{sec:OtherExamples} for the AIC in the i.i.d. case. A similar analysis
can clearly be carried out for the AIC applied to time series models or for Mallow's $C_p$ criterion. 

\section*{Acknowledgements}

We would like to thank the referees for their valuable comments.

\bibliographystyle{plainnat}
\bibliography{local}

\subsection*{Appendix: detailed proofs}

\begin{proof}[Proof of Theorem~\ref{theo:argmaxParam} in the case~(C-\ref{it:LinftyConvinRp})]
Recall that, in the  case~(C-\ref{it:LinftyConvinRp}), we set $\bPhi=\Rset^p$ and   $\calD=\ell^\infty(\paramset,\Rset^p)$. 
By Theorem~1.5.4 in~\cite{vandervaart:wellner:1996}, to show that $\hbbetadif$ admits a version in $\calD$ with
$\hbbetadif_n\wc\hbbetadif$ in $\calD$, it is sufficient to 
show that the finite-dimensional distributions of $\hbbetadif_n$ converge to those of $\hbbetadif$ and that
$(\hbbetadif_n)$ is asymptotically tight. The convergence of the finite-dimensional distributions  follows from the case 
(C-\ref{it:finite-dim-case}) that we already proved. Hence to conclude the proof in the case~(C-\ref{it:LinftyConvinRp}), it
only remains to show that $(\hbbetadif_n)$ is asymptotically tight. 
In the following we show that this uniform tightness is inherited from that of $(\pencontrastnorm)$ in
$\ell^\infty(K\times\paramset)$. Asymptotic tightness follows from an equicontinuity criterion. The proof has now two steps.
In Step~1, we construct a metric $\tilde{\rho}$ on $\paramset$ based on a metric $\rho$ that makes $\pencontrastnorm$
asymptotically  uniformly equicontinuous. In Step~2 we use the metric $\tilde{\rho}$ to prove an equicontinuity criterion for
$\hbbetadif_n$. 

\noindent\textbf{Step~1.} By successively applying Lemma~1.3.8 and Theorem~1.5.7 in~\cite{vandervaart:wellner:1996},
Condition~(\ref{it:LinftyConvContrast}) 
implies that, for any compact set $K\subset\Rset^p$, $\pencontrastnorm$ is asymptotically tight in
$\ell^\infty(K\times\paramset)$ and there exists a semi-metric $\rho$ 
on $K\times\paramset$ such that $(K\times\paramset,\rho)$ is totally bounded and $\pencontrastnorm$ is asymptotically  uniformly
$\rho$-equicontinuous in probability. This means that, for any $\epsilon,\alpha>0$, there exists $\delta>0$ such that
\begin{equation}
  \label{eq:equicontContrast}
\limsup P^*\left(\sup_{(\doubleparam,\doubleparam')\in \calS_\delta(K)}|\pencontrastnorm(\doubleparam)-\pencontrastnorm(\doubleparam')|>\alpha\right)
\leq\epsilon \;,  
\end{equation}
where 
$$
\calS_\delta(K)=\left\{((\bphi,\param),(\bphi',\param'))\in (K\times\paramset)^2~:~\rho((\bphi,\param),(\bphi',\param'))<\delta\right\}\;.
$$
Clearly, the semi-metric $\rho$ can be assumed to be bounded and not to depend on the compact set $K$ without loss of
generality; in other words, a bounded semi-metric $\rho$ can be defined on $\Rset^p\times\paramset$ so that
$(\Rset^p\times\paramset,\rho)$ is totally bounded and $\pencontrastnorm$ is asymptotically  uniformly $\rho$-equicontinuous
in probability on $K\times\paramset$ for any compact set $K$. We shall use this semi-metric in the following to   
show that $\hbbetadif_n$ is asymptotically  uniformly $\tilde{\rho}$-equicontinuous in probability,
where $\tilde{\rho}$ is the semi-metric defined on $\paramset$ by
$$
\tilde{\rho}(\param,\param')=\sup_{\bphi\in\Rset^p}\rho((\bphi,\param),(\bphi,\param'))\;.
$$
By~\cite[Theorem~1.5.7]{vandervaart:wellner:1996}, the asymptotic uniform $\tilde{\rho}$-equicontinuity in probability
implies that $(\hbbetadif_n)$ is asymptotically tight.

\noindent\textbf{Step~2.} It now remains to show that  $(\hbbetadif_n)$ is asymptotically  uniformly $\tilde{\rho}$-equicontinuous in probability.
Let $\eta$ and $\epsilon$ be two arbitrarily small positive numbers.
By Conditions~(\ref{it:limitTight}) and~(\ref{it:UnifTight}), we may choose a compact $K\subset\Rset^p$ such that
\begin{equation}
  \label{eq:BBnLimCLT}
 P(B)\leq\epsilon \quad\text{and}\quad
\limsup P^*(B_n)\leq\epsilon\;,
\end{equation}
where
$$
B=\{\hbbetadif(\param)\in K\text{ for all }\param \in\paramset\}^c 
\quad\text{and}\quad B_n=\left\{\hbbetadif_n(\param)\in K\text{ for all }\param \in\paramset\right\}^c\;.
$$

Using Condition~(\ref{it:isolatedMinimumlimitCLT}), we may find $\alpha>0$ arbitrarily small such that
\begin{equation}
  \label{eq:DefEpsilonCLT}
P\left(
\inf_{\param\in\paramset}\big[
\inf\left\{\pencontrastlim(\bphi,\param)~:~\bphi\in K,\,\|\bphi-\hbbetadif(\param)\|\geq\eta/2\right\}  
- \pencontrastlim(\hbbetadif(\param),\param)\big]\leq4\alpha\right)\leq\epsilon \;.
\end{equation}
We further choose $\delta>0$ so that Inequality~(\ref{eq:equicontContrast}) holds, that is
\begin{equation}
  \label{eq:EnLimCLT}
 \limsup P^*(E_n)\leq\epsilon \;,
\end{equation}
where
$$
E_n=\left\{\sup_{(\doubleparam,\doubleparam')\in
    \calS_\delta(K)}|\pencontrastnorm(\doubleparam)-\pencontrastnorm(\doubleparam')|>\alpha\right\}\;. 
$$
Finally, Condition~(\ref{it:DefArgminFalCLT}) gives that
\begin{equation}
  \label{eq:CnLimCLT}
\limsup P^*(C_n)=0\;,
\end{equation}
where
$$
C_n=\left\{\sup_{\param\in\paramset}\left\{\pencontrastnorm(\hbbetadif_n(\param),\param)
-\inf_{\bphi\in\bPhi}\pencontrastnorm(\bphi,\param)\right\}_+ >\alpha\right\}\;.
$$
On $B_n^c$, we notice that $((\hbbetadif_n(\param'),\param),(\hbbetadif_n(\param'),\param'))\in\calS_\delta(K)$
for every  $(\param,\param')$ such that $\tilde{\rho}(\param,\param')<\delta$.
Hence, on $B_n^c\cap E_n^c$, we have
\begin{equation}
  \label{eq:BcapE_CLT}
\tilde{\rho}(\param,\param')<\delta\Rightarrow
\pencontrastnorm(\hbbetadif_n(\param'),\param)\leq\pencontrastnorm(\hbbetadif_n(\param'),\param')+\alpha \; .  
\end{equation}
Suppose for a moment that we are on the set 
$$
D_n=\left\{\sup_{\tilde{\rho}(\param,\param')<\delta}\left\|\hbbetadif_n(\param)-\hbbetadif_n(\param')\right\|>\eta
\right\}\;.
$$ 
Then we may find $(\param,\param')\in\paramset^2$ such that $\tilde{\rho}(\param,\param')<\delta$ and
$\|\hbbetadif_n(\param)-\hbbetadif_n(\param')\|>\eta$. On $C_n^c$, we further have
$\pencontrastnorm(\hbbetadif_n(\param'),\param')\leq\inf_{\bphi\in\bPhi}\pencontrastnorm(\bphi,\param') +\alpha$. 
Intersecting with $B_n^c\cap E_n^c$ and applying~(\ref{eq:BcapE_CLT}), we obtain
$$
\pencontrastnorm(\hbbetadif_n(\param'),\param)\leq\inf_{\bphi\in\bPhi}\pencontrastnorm(\bphi,\param')+2\alpha
\leq \pencontrastnorm(\hbbetadif_n(\param),\param')+2\alpha\leq
\pencontrastnorm(\hbbetadif_n(\param),\param)+3\alpha \; ,
$$
where the last inequality is obtained by exchanging $\param$ with $\param'$ in~(\ref{eq:BcapE_CLT}).
Applying again that we are on $C_n^c$, we have
$\pencontrastnorm(\hbbetadif_n(\param),\param)\leq\inf_{\bphi\in\bPhi}\pencontrastnorm(\bphi,\param) +\alpha$, 
and thus, with the last display, we get
$$
\max\left(\pencontrastnorm(\hbbetadif_n(\param),\param),\pencontrastnorm(\hbbetadif_n(\param'),\param)\right)
\leq \inf_{\bphi\in\bPhi}\pencontrastnorm(\bphi,\param) +4\alpha 
\leq \inf_{\bphi\in K}\pencontrastnorm(\bphi,\param) +4\alpha \; .
$$
Since $\|\hbbetadif_n(\param)-\hbbetadif_n(\param')\|>\eta$ and $\hbbetadif_n(\param)$ and $\hbbetadif_n(\param')$ belong to
$K$ on $B_n^c$, we just proved that $D_n\cap  C_n^c\cap B_n^c\cap E_n^c$ is included in
$$
F_n=\left\{\inf_{\param\in\paramset}\left[\inf_{(\bphi,\bphi')\in\calB_\eta(K)}
\max\left(\pencontrastnorm(\bphi,\param),\pencontrastnorm(\bphi',\param)\right)-\inf_{\bphi\in
  K}\pencontrastnorm(\bphi,\param)\right]\leq 4\alpha\right\}\;,
$$
where 
$$
\calB_\eta(K)=\left\{(\bphi,\bphi')\in K^2~:~\|\bphi-\bphi'\|>\eta\right\}\;.
$$
Using Condition~(\ref{it:LinftyConvContrast}) and the continuous mapping Theorem, we have
$\limsup P^*(F_n) \leq P(F)\;,$
where
$$
F=\left\{\inf_{\param\in\paramset}\left[\inf_{(\bphi,\bphi')\in\calB_\eta(K)}
\max\left(\pencontrastlim(\bphi,\param),\pencontrastlim(\bphi',\param)\right)-\inf_{\bphi\in
  K}\pencontrastlim(\bphi,\param)\right]\leq 4\alpha\right\}\;.
$$
Since $D_n\cap  C_n^c\cap B_n^c\cap E_n^c\subset F_n$, using~(\ref{eq:BBnLimCLT}),~(\ref{eq:EnLimCLT})
and~(\ref{eq:CnLimCLT}), we further obtain  
$$
\limsup P^*(D_n)\leq  P(F) + 2 \epsilon \; .
$$
Observe that for all $(\bphi,\bphi')\in\calB_\eta(K)$ and $\param\in\paramset$, we have $\|\bphi-\hbbetadif(\param)\|>\eta/2$  
or $\|\bphi'-\hbbetadif(\param)\|>\eta/2$. Hence, for all  $\param\in\paramset$,
$$
\inf_{(\bphi,\bphi')\in\calB_\eta(K)}\max\left(\pencontrastnorm(\bphi,\param),\pencontrastnorm(\bphi',\param)\right)\geq
\inf\left\{\pencontrastlim(\bphi'',\param)~:~\bphi''\in K,\,\|\bphi''-\hbbetadif(\param)\|\geq\eta/2\right\}  \;.
$$
Further, by definition of $B$, we have on $B^c$ that for all $\param\in\paramset$,
$\inf_{\bphi\in K}\pencontrastlim(\bphi,\param)\leq\pencontrastlim(\hbbetadif(\param),\param)$.
This and the last display show that $F\cap B^c$ is included in
$$
\left\{\inf_{\param\in\paramset}\big[
\inf\left\{\pencontrastlim(\bphi,\param)~:~\bphi\in K,\,\|\bphi-\hbbetadif(\param)\|\geq\eta/2\right\}
-\pencontrastlim(\hbbetadif(\param),\param)
\big] \leq 4\alpha \right\}\;,
$$
which, by~(\ref{eq:DefEpsilonCLT}), has probability at most $\epsilon$ for our choice of $\alpha$.
Since $K$ has been chosen so that $P(B)\leq\epsilon$, we finally get
$$
\limsup P^*(D_n)\leq 4\epsilon \; .
$$
This exactly says that $(\hbbetadif_n)$ is asymptotically  uniformly
$\tilde{\rho}$-equicontinuous in probability and the proof is achieved.
\end{proof}

\begin{proof}[Proof of Theorem~\ref{theo:ConvexConsistence}]
Let $\epsilon>0$ and denote by $B'=\{\bphi:\|\bphi-\bbeta\|\leq 2\epsilon\}$
and $B=\{\bphi:\|\bphi-\bbeta\|\leq \epsilon\}$ the balls centered at $\bbeta$ with radii $2\epsilon$ and $\epsilon$. 
We choose $\epsilon$ small enough so that $B'\subseteq V$. 
We first show that Assumption~\ref{assum:uniform-consistency} holds for $M$ defined on $\bPhi$ by 
\begin{equation}
\label{eq:MdefConvex}
M(\bphi)=\begin{cases}
\Delta(\bphi) &\text{ if } \bphi\in B\;,\\
\Delta(\bbeta)+ \alpha/2&\textrm{otherwise}\;,
\end{cases}
\end{equation}
where 
\begin{equation}\label{eq:alphaDef}
\alpha = \inf_{\bphi\in B'\setminus B} \Delta(\bphi) -\Delta(\bbeta)>0  \;.
\end{equation}
The positiveness of $\alpha$ follows from the strict convexity of $\Delta$ and Assumption~\ref{assump:convex}-(\ref{it:convex-minimum}).
Assumption~\ref{assum:uniform-consistency}-(\ref{it:SimpleConv}) follows from
Assumption~\ref{assump:convex}-(\ref{it:pointwise-conv}). 
Assumption~\ref{assum:uniform-consistency}-(\ref{it:IsolMin}) follows from the strict convexity of $\Delta$,
Assumption~\ref{assump:convex}-(\ref{it:convex-minimum}) and the definition of $M$ in~(\ref{eq:MdefConvex}).
It only remains to prove that Assumption~\ref{assum:uniform-consistency}-(\ref{it:consUnfiLowBound}) holds.  
By \cite[Theorem~10.8 ]{rockafellar:1970} and arguing as in the proof of Lemma~3 in \cite{niemiro:1992} for getting the
result in the sense of the convergence in probability, the pointwise convergence in
Assumption~\ref{assump:convex}-(\ref{it:pointwise-conv}) implies the
uniform convergence on the compact set $B'$, that is,
\begin{equation}
  \label{eq:uniformConvConvex}
  \sup_{\bphi\in B'}\left|M_n(\bphi) -\Delta(\bphi)\right|\cp 0\; .
\end{equation}
Let $\Omega'$ be a probability 1 set on which $M_n$ is convex and define
$$
A_n=\left\{\sup_{\bphi\in B'}\left|M_n(\bphi) -\Delta(\bphi)\right|\leq\alpha/4\right\}\cap \Omega' \; .
$$
The set $A_n$ is measurable since $M_n$ and $\Delta$ are convex on $\bPhi$ and thus the sup can be replaced by a sup on a 
countable dense subset of $B'$ without changing the definition of $A_n$.  
Let $\omega\in A_n$.
For all $\bphi\in B'\setminus {B}$ and $\param\in[0,L]$, we have
$M_n(\omega,\bphi) \geq \Delta(\bphi)-\alpha/4$, $\Delta(\bphi)\geq\Delta(\bbeta)+\alpha$, and, since $\bbeta\in B'$,
$\Delta(\bbeta)\geq M_n(\omega,\bbeta)-\alpha/4$. Hence
$$
\inf_{\bphi\in B'\setminus {B}}M_n(\omega,\bphi) \geq M_n(\omega,\bbeta) +\alpha/2  \;. 
$$
By convexity of the function $M_n(\omega,\cdot)$ and of the set $\bPhi$, the last display implies that 
$$
\inf_{\bphi\in \bPhi\setminus {B}}M_n(\omega,\bphi) \geq M_n(\omega,\bbeta) +\alpha/2  \;. 
$$
For all $\omega\in A_n$, using the definition of $M$ in~(\ref{eq:MdefConvex}), we thus have, for all
$\bphi\in\bPhi\setminus {B}$, 
$$
\left\{M(\bphi)-M_n(\omega,\bphi)\right\}_+= \left\{\Delta(\bbeta)+ \alpha/2-M_n(\omega,\bphi)\right\}_+
\leq \left|\Delta(\bbeta)+M_n(\omega,\bbeta)\right|\;.
$$
Using this with~(\ref{eq:uniformConvConvex}) and $P(A_n)\to1$, we get
Assumption~\ref{assum:uniform-consistency}-(\ref{it:consUnfiLowBound}). 
We conclude that Assumption~\ref{assum:uniform-consistency} holds and we obtain Assertion~(\ref{it:ConvUnifMestPenConvex}) as an
application of Theorem~\ref{theo:uniform-consistency}.

Next we show Assertion~(\ref{it:StrictlyConvex}) and thus assume that $J_n$ is strictly convex.
The proof of Assertion~(\ref{it:MStrictlyConvex}) is similar and thus omitted. 
We set
$$
L_n=\frac{\alpha}{4J_n(\bbeta)} \;,
$$
so that $L_n\to\infty$ by assumption on $J_n(\bbeta)$ and $\param J_n(\bbeta)\leq\alpha/4$ for all $\param\leq L_n$. 
Let $\omega\in A_n$. Then, for all $\bphi\in B'\setminus {B}$ and $\param\in[0,L_n]$, using that
$\pencontrast(\omega,\bphi,\param)\geq M_n(\omega,\bphi)$ and $M_n(\omega,\bbeta)=
\pencontrast(\omega,\bbeta,\param)-\param J_n(\bbeta)\geq\pencontrast(\omega,\bbeta)-\alpha/4$, we obtain  
$$
\inf_{\param\in[0,L_n]}\inf_{\bphi\in B'\setminus {B}}\pencontrast(\omega,\bphi,\param) \geq \pencontrast(\omega,\bbeta,\param) +\alpha/4  \;. 
$$
Since $J_n$ is strictly convex, so is the function $\pencontrast(\omega,\cdot,\param)$ for $\param>0$. 
By convexity of the set $\bPhi$, the previous display implies that 
for all $\param\in[0,L_n]$, the minimum of $\pencontrast(\omega,\bphi,\param)$ on $\bphi\in\bPhi$ is
attained within $B$. By strict convexity of $J_n$, this minimum is unique for $\param>0$ and we let
$\hbbeta_n(\omega,\param)$ be this unique minimum for $\param\in(0,L_n]$. For $\omega\in
A_n^c$ or $\param>L_n$, we define $\hbbeta_n(\omega,\param)=\bphi_0$, where
$\bphi_0$ is any fixed point of $\bPhi$. As for $\param=0$ and $\omega\in A_n$,  we define
$$
\hbbeta_n(\omega,0)=\liminf_{\param\downarrow0} \hbbeta_n(\param) \in B\;,
$$
where the $\liminf$ is defined component-wise in a given coordinate system of the Euclidean space containing $\bPhi$.  
Since the minimum of $\pencontrast(\omega,\bphi,\param)$ on $\bphi\in\bPhi$ is
attained within the compact set $B$, by continuity of $J_n(\bphi)$ and $M_n(\omega,\bphi)$ in $\bphi$, $\hbbeta_n(\omega,0)$ 
is a minimizer of $\pencontrast(\omega,\bphi,0)$ on $\bphi\in\bPhi$.
Thus, we have defined a r.v. $\hbbeta_n(\cdot,\param)$ for any $\param\geq0$, for which
Property~(\ref{it:UniqueMinProperty}) holds. 

To conclude the proof, we show that Property~(\ref{it:ConvexContProperty}) holds.
The continuity on $(L_n,\infty)$ for $\omega\in A_n$ and on $\Rset_+$ for $\omega\in A_n^c$  directly follows from the
definition of $\hbbeta_n(\omega,\param)$.
Let us now prove that $\hbbeta_n(\omega,\cdot)$ is continuous on $(0,L_n]$ for all $\omega\in A_n$. Since $J_n$ is convex,
it is bounded on $B$ and since $\hbbeta_n(\omega,\param)\in B$, we have
$\sup_{\param\in(0,L_n]}J_n(\hbbeta_n(\omega,\param))\leq \sup J_n(B)<\infty$. 
Let $\param$ and $\param_0$ be in $(0,L_n]$. 
We have
\begin{align*}
\pencontrast(\hbbeta_n(\omega,\param),\param_0)&\leq \pencontrast(\hbbeta_n(\omega,\param),\param)+|\param_0-\param|\;\sup J_n(B)\\
&\leq \pencontrast(\hbbeta_n(\omega,\param_0),\param)+|\param_0-\param|\;\sup J_n(B)\\
&\leq \pencontrast(\hbbeta_n(\omega,\param_0),\param_0)+2|\param_0-\param|\;\sup J_n(B)\;.
\end{align*}
Since $\pencontrast(\hbbeta_n(\omega,\param_0),\param_0)\leq\pencontrast(\hbbeta_n(\omega,\param),\param_0)$, we get that
$\pencontrast(\hbbeta_n(\omega,\param),\param_0)\to\pencontrast(\hbbeta_n(\omega,\param_0),\param_0)$ as $\param\to\param_0$.
Since, by strict convexity of $\pencontrast$, $\hbbeta_n(\omega,\param_0)$ is an isolated minimum of
$\pencontrast(\cdot,\param_0)$, this implies that $\hbbeta_n(\omega,\param)\to\hbbeta_n(\omega,\param_0)$ as
$\param\to\param_0$. The continuity of $\hbbeta_n(\omega,\cdot)$ on $(0,L_n]$ follows and the proof is achieved.  
\end{proof}

\begin{proof}[Proof of Proposition~\ref{prop:uniform-consistency}]
 Let $\epsilon>0$ and define 
$$
\alpha=\inf_{\param\in\paramset}\left[\inf_{d(\bphi,\bbeta)\geq\epsilon/2}\Lambda(\bphi;\param)-\Lambda(\bbeta(\param),\param)\right]\;.
$$
By~(\ref{it:IsolMinParam}), we have $\alpha>0$. 
Denote
$$
A_n=\left\{\sup_{\param\in\paramset}d(\hbbeta_n(\param),\bbeta(\param))\geq\epsilon\right\}\subseteq\Omega \; .
$$
For all $\omega\in A_n$, there exists $\param\in\paramset$ such that
$d(\hbbeta_n(\omega,\param),\bbeta(\param))\geq\epsilon/2$, and thus for which 
$\Lambda(\hbbeta_n(\omega,\param),\param)-\Lambda(\bbeta(\param),\param)
\geq \alpha$. Hence, for all $\omega\in A_n$, we have
$$
\sup_{\param\in\paramset}\left[\Lambda(\hbbeta_n(\omega,\param),\param)-\Lambda(\bbeta(\param),\param)\right]\geq\alpha\;.
$$
Now we write, for any $\param_0\in\paramset$,
\begin{align*}
  \Lambda(\hbbeta_n(\param_0),\param_0)-\Lambda(\bbeta(\param_0),\param_0)
&=\left\{\Lambda(\hbbeta_n(\param_0),\param_0)-\pencontrast(\hbbeta_n(\param_0),\param_0)\right\}\\
& \hspace{-2.5cm}+\left\{\pencontrast(\hbbeta_n(\param_0),\param_0)-\pencontrast(\bbeta(\param_0),\param_0)\right\}
%\\& \hspace{0.5cm}
+\left\{\pencontrast(\bbeta(\param_0),\param_0)-\Lambda(\bbeta(\param_0),\param_0)\right\}\\
&\leq \displaystyle\sup_{\bphi\in\bPhi}\sup_{\param\in\paramset}
\left\{\Lambda(\bphi;\param)-\pencontrast(\bphi,\param)\right\}_+\\
& \hspace{-2.5cm}+\displaystyle\sup_{\param\in\paramset}\displaystyle\left\{\pencontrast(\hbbeta_n(\param),\param)-\pencontrast(\bbeta(\param),\param)\right\}_+
%\\& \hspace{0.5cm}
+\displaystyle\sup_{\param\in\paramset}\left|\pencontrast(\bbeta(\param),\param)-\Lambda(\bbeta(\param),\param)\right| \;.
\end{align*}
Taking the $\sup$ in $\param_0\in\paramset$ we obtain that $A_n\subseteq A_n^{(1)}\cup A_n^{(2)}\cup A_n^{(3)}$, where 
$A_n^{(1)}=\{\sup_{\bphi\in\bPhi}\sup_{\param\in\paramset}
\left\{\Lambda(\bphi;\param)-\pencontrast(\bphi,\param)\right\}_+\geq\alpha/3\}$, and where $A_n^{(2)}$ and $A_n^{(3)}$ are defined
accordingly by using the last 2 lines of the last
display. 
Applying
$P^*(A_n)\leq P^*(A_n^{(1)})+P^*(A_n^{(2)})+P^*(A_n^{(3)})$,~(\ref{it:consUnfiLowBoundParam}),~(\ref{it:SimpleConvParam})
and~(\ref{it:DefMestimaParam}), we thus get~(\ref{eq:ConvUnifMestPenParam}), which achieves the proof. 
\end{proof}

\begin{proof}[Proof of Theorem~\ref{theo:uniform-consistency}]
We apply Proposition~\ref{prop:uniform-consistency} with $\pencontrast$ defined
by~(\ref{eq:DefPenContrast}), $\Lambda(\bphi,\param)=M(\bphi)$ and $\bbeta(\param)=\bbeta$ for all $\param$.  
Let us check the conditions in Proposition~\ref{prop:uniform-consistency}. 
Since $J_n$ is non-negative, 
$$
\left\{\Lambda(\bphi;\param)-\pencontrast(\bphi;\param)\right\}_+\leq\left\{M(\bphi)-M_n(\bphi)\right\}_+\;,
$$
and Condition~(\ref{it:consUnfiLowBoundParam}) follows from
Assumption~\ref{assum:uniform-consistency}-(\ref{it:consUnfiLowBound}). Condition~(\ref{it:SimpleConvParam}) follows from
Assumption~\ref{assum:uniform-consistency}-(\ref{it:SimpleConv}) and $J_n(\beta)\to0$. Conditions~(\ref{it:IsolMinParam})
and~(\ref{it:DefMestimaParam}) directly follow from Assumption~\ref{assum:uniform-consistency}-(\ref{it:IsolMin}) and
Eq.~(\ref{eq:DefMestimaPen}), respectively. Hence~(\ref{eq:ConvUnifMestPen}) follows from~(\ref{eq:ConvUnifMestPenParam}).
\end{proof}

\begin{proof}[Proof of Proposition~\ref{prop:vitesse}]
Denote the left-hand side
of~(\ref{eq:ConvUnifMestPenParamVitesse}) by $U_n$ and the left-hand side of~(\ref{eq:Gncond}) by $V_n$. 
Let $\delta>1$ and define  $A_n=\left\{U_n > \delta\right\}$. 
Then for all $\omega\in A_n$, we have 
\begin{equation}
  \label{eq:GncondOnAn}
\sup_{\param\in\paramset}\left|G_n(\hbbeta_n(\param),\param)\right|\leq 2n^{-1}\;\delta^{-1}\;U_n^2 \; V_n \; .  
\end{equation}
By~(\ref{it:Rncond}), using the assumed uniform $P^*$-consistency~(\ref{eq:ConvUnifMestPenParam}), there exist
non-negative random sequences $w_n$ and $W_n$ such that $w_n=o_{P^*}(1)$, $W_n=O_{P^*}(1)$ and
$$
\sqrt{n}\sup_{\param\in\paramset}\left|R_n(\hbbeta_n(\param),\param)\right|\leq (U_n\;w_n+W_n)\; ,
$$
hence, for  all $\omega\in A_n$,
$$
n\;\sup_{\param\in\paramset}\left\{d(\hbbeta_n(\param),\bbeta(\param))\left|R_n(\hbbeta_n(\param),\param)\right|\right\}
\leq U_n\;(U_n\;w_n+W_n)
\leq U_n^2\;(w_n+W_n/\delta)\; .
$$
Denote the left-hand side of (\ref{eq:DefMestimaParamVitesse}) by $S_n$. The last display,~(\ref{eq:GncondOnAn})
and~(\ref{eq:ContrasteDecompVitesse}) imply that, for all  $\omega\in A_n$ and all $\param\in\paramset$, 
$$
H(\hbbeta_n(\param),\param)\leq S_n +U_n^2\;n^{-1}\;\left\{ 2\delta^{-1}\;V_n+w_n+W_n/\delta\right\}\;.
$$
Define $B_n=\{\sup_{\param\in\paramset}d(\hbbeta_n(\param),\bbeta(\param)) >\epsilon\}$ where $\epsilon$ is the positive
number in Condition~(\ref{it:Hcond}) and denote the left-hand side of~(\ref{eq:Hcond}) by $\alpha$, which is
positive. Then, for all  $\omega\in B_n^c$,
$ \alpha\; U_n^2\leq n\,\sup_{\param\in\paramset}H(\hbbeta_n(\param),\param)$, and, using the previous display, if moreover
$\omega\in A_n$, 
$$
 \alpha\; U_n^2 \leq n\;S_n + U_n^2\;\left\{ 2\delta^{-1}\;V_n+w_n+W_n/\delta\right\} \;.
$$
Using that $P^*(B_n)\to0$, $nS_n=O_{P^*}(1)$, $V_n=O_{P^*}(1)$, $w_n=o_{P^*}(1)$ and $W_n=O_{P^*}(1)$, we easily get that
$\limsup P^*(A_n)$ can be made arbitrarily small by taking $\delta$ large
enough. Hence~(\ref{eq:ConvUnifMestPenParamVitesse}) holds. 
\end{proof}

\begin{proof}[Proof of Theorem~\ref{theo:CLTStochDiff}]
Let us define $\hbbetadif_n=\sqrt{n}(\hbbeta_n-\bbeta)$ and
\begin{equation}
  \label{eq:pencontrastnormDef}
  \pencontrastnorm(\bphi,\param)=
n\left\{\pencontrast(\bbeta(\param)+n^{-1/2}\bphi,\param)-\pencontrast(\bbeta(\param),\param) \right\}\;.
%nG_n\left(\hbbeta_n(\param)+n^{-1/2}\bphi,\param\right )+
%n H\left(\hbbeta_n(\param)+n^{-1/2}\bphi,\param\right)\;,
\end{equation}
We will apply Theorem~\ref{theo:argmaxParam} with these definitions (in the case (C-\ref{it:LinftyConvinRp})) and thus now
proceed in checking the conditions of Theorem~\ref{theo:argmaxParam} successively.
Let $K$ be a compact subset of $\bPhi$.
Using~(\ref{eq:ContrasteDecompVitesse}),~(\ref{eq:pencontrastnormLinPartDef}) and~(\ref{eq:pencontrastnormDef}), we get
$$
\pencontrastnorm(\bphi,\param)=\widehat{G}_n(\bphi,t)+nH\left(\bbeta(\param)+n^{-1/2}\bphi,\param\right)
+\sqrt{n}\|\bphi\|R_n\left(\bbeta(\param)+n^{-1/2}\bphi,\param\right)\;.
$$
Observe that by~(\ref{eq:HcondS1}) and~(\ref{eq:HcondS2}), as functions of $(\bphi,\param)$,
$$
nH\left(\bbeta(\param)+n^{-1/2}\bphi,\param\right)\to \bphi^T\Gamma(\param)\bphi\quad \text{in}\quad\ell^\infty(K\times\paramset,\Rset^p)\;.
$$
Applying~(\ref{eq:RncondS}), we obtain
$$
\sup_{(\bphi,\param)\in K\times\paramset}\sqrt{n}\|\bphi\|\,
\left|R_n\left(\bbeta(\param)+n^{-1/2}\bphi,\param\right)\right|=o_{P^*}(1).
$$
Hence using that $\widehat{G}_n\wc G$ in $\ell^\infty(K\times\paramset,\Rset^p)$, the three last displays yield
$\pencontrastnorm\wc\pencontrastlim$ in $\ell^\infty(K\times\paramset,\Rset^p)$. Since $G$ is tight in
$\ell^\infty(K\times\paramset,\Rset^p)$ by assumption,  $\pencontrastlim$ also is and thus
Condition~(\ref{it:LinftyConvContrast}) holds.
Conditions~(\ref{it:isolatedMinimumlimitCLT}) and~(\ref{it:limitTight}) hold by assumption.
Applying Proposition~\ref{prop:vitesse}, we obtain~(\ref{eq:ConvUnifMestPenParamVitesse}) and thus
Condition~(\ref{it:UnifTight}) holds.
Using~(\ref{eq:DefMestimaParamVitesseS}) with the above definitions, we get that
Condition~(\ref{it:DefArgminFalCLT}) holds.
\end{proof}

\begin{proof}[Proof of Theorem~\ref{theo:PollardWithPen}]
We shall apply Theorem \ref{theo:CLTStochDiff} for $\pencontrast$ given by~(\ref{eq:DefPenContrast}) and with
$\bbeta(\param)=\bbeta$ for all $\param\in\paramset$. 
Let us check that the assumptions of this theorem hold in this context.
Condition~(\ref{eq:DefMestimaParamVitesseS}) and  the uniform $P^*$-consistency~(\ref{eq:ConvUnifMestPenParam}) hold by
assumption. The decomposition~(\ref{eq:ContrasteDecompVitesse}) holds with
\begin{align*}
&G_n(\bphi,\param)=(\bphi-\bbeta)^TP_n\Delta+\param\left(J_n(\bphi)-J_n(\bbeta)\right)\1(\|\bphi-\bbeta\|\leq1)\;,\\
&H(\bphi,\param)=Pg(\cdot,\bphi)-Pg(\cdot,\bbeta)-(\bphi-\bbeta)^TP\Delta\;,\\
&R_n(\bphi,\param)= n^{-1/2}\nu_n\,r(\cdot,\bphi)+\param\|\bphi-\bbeta\|^{-1} \left(J_n(\bphi)-J_n(\bbeta)\right)\1(\|\bphi-\bbeta\|>1)\;.
\end{align*}

Using~(P-\ref{it:Pollardiid}) and~(P-\ref{it:PollardDelta}), we have $\sum_{k=1}^n\Delta(\xi_k)=O_P(n^{1/2})$ and,
using~(\ref{eq:condJn}), we get that Condition~(\ref{it:GncondS}) in Theorem~\ref{theo:CLTStochDiff} holds.
Observe that $H(\bphi,\param)$ does not depend on $\param$ and, by~(P-\ref{it:PollardDelta}), we have 
$$
H(\bphi,\param)=M(\bphi)-M(\bbeta)\;.
$$
Integrating $x$ with respect to $P$ in~(\ref{eq:TaylorOnG}) and using~(P-\ref{it:SDC}), we get that the first derivative of
$M$ at $\bbeta$ is zero and, by~(P-\ref{it:PollardMder}), 
$$
H(\bphi,\param)=(\bphi-\bbeta)^T\Gamma(\bphi-\bbeta)+o\left(\|\bphi-\bbeta\|^2\right)\;.
$$
Hence Condition~(\ref{it:HcondS}) in Theorem~\ref{theo:CLTStochDiff}  holds.

We have, for any sequence of positive r.v. $(r_n)$ such that $r_n\cp0$,
\begin{align*}
\sup_{\|\bphi-\bbeta\|\leq r_n}\left\{\left|n^{-1/2}\nu_n\,r(\cdot,\bphi)\right|\right\}
&\leq 
\frac{1+\sqrt{n}r_n}{\sqrt{n}}
\sup_{\|\bphi-\bbeta\|\leq r_n}
\left\{\frac{\left|\nu_n\,r(\cdot,\bphi)\right|}{1+\sqrt{n}\|\bphi-\bbeta\|}\right\} \\
&=o_P(n^{-1/2})+o_P(r_n) \; ,
\end{align*}
where the last equality follows from~(P-\ref{it:SDC}).
Observing that, for $\|\bphi-\bbeta\|\leq r_n$ and $r_n\leq 1$ the second term defining $R_n$ vanishes, we obtain
Condition~(\ref{eq:RncondS}) in Theorem~\ref{theo:CLTStochDiff}. 

Defining $\widehat{G}_n$ as in~(\ref{eq:pencontrastnormLinPartDef}) gives
$$
\widehat{G}_n(\bphi,\param)=\bphi^T\left(\sqrt{n}P_n\Delta\right)+\param \left[n\;J_{n}(\bbeta+n^{-1/2}\bphi)-n\;J_n(\bbeta)\right]\;. 
$$
Using~(P-\ref{it:Pollardiid}) and~(P-\ref{it:PollardDelta}), we have that $\sqrt{n}P_n\Delta$ converge in distribution to $W$ and,
by~(\ref{eq:penAsymp}), for any compact $K\subset\Rset^p$
$\widehat{G}_n\wc G$ in $\ell^\infty(K\times\paramset,\Rset^p)$, where
$G(\bphi,\param)=\bphi^TW+\param \;J_\infty(\bphi)$. 
This definition of $G$ and~(\ref{eq:pencontrastlimDef}) gives~(\ref{eq:pencontrastlimDefPolard}).
Hence Theorem~\ref{theo:CLTStochDiff} yields~(\ref{eq:PollardCLTStochDiff}).
\end{proof}

\begin{proof}[Proof of Lemma~\ref{lem:KFpenalties}]
We have, for all $\bphi\in\Rset^p$,
$$
\left|\sum_{k=1}^p |\phi_k|^\gamma-\sum_{k=1}^p |\beta_k|^\gamma\right|\leq C\left(\|\bphi-\bbeta\|^\gamma+\|\bphi-\bbeta\|\right)\;,
$$
where $C$ only depends on $\bbeta$ and $\gamma>0$. The bound~(\ref{eq:boundJnKF}) follows directly for $\gamma\geq1$. For $\gamma<1$, one
obtains
$$
 n\left|J_n^{(\gamma)}(\bphi)-J_n^{(\gamma)}(\bbeta)\right|\leq C'\left((\sqrt{n}\|\bphi-\bbeta\|)^\gamma
+n^{\gamma/2}\|\bphi-\bbeta\|\right)\;,
$$
and~(\ref{eq:boundJnKF}) follows by oberving that $a^\gamma\leq 1+a$ for $a\geq0$, and $n^{\gamma/2}\leq n^{1/2}$.

Relation~(\ref{eq:convJnKF}) is easily obtained by using the Taylor expansion, valid for $x\neq 0$,
$|x+y|^\gamma=|x|^\gamma+\gamma|x|^{\gamma-1}\sgn(x)\,y+O(y^2)$, which concludes the proof.
\end{proof}

%\section{Application to the regularization path of the lasso}
%\label{Applicationlasso}
%We are now in a position to prove Theorems~\ref{theo:lassoConsistence} and~\ref{theo:lassoCLT}.

\begin{proof}[Proof of Theorem~\ref{theo:lassoConsistence}]
As $\bphi\mapsto M_n(\bphi) = \frac{1}{n} \sum_{k=1}^n (y_k-\bx_k^T\bphi)^2$ 
is a convex function, we apply Theorem~\ref{theo:ConvexConsistence}. In fact, by
Assumption~\ref{assump:Consistencelasso}-(\ref{it:CondConsistencelasso1}), $M_n$ is strictly convex for $n$ large enough, and
hence the more precise Assertion~(\ref{it:MStrictlyConvex}) applies.  
We now show that Assumption~\ref{assump:convex}-(\ref{it:pointwise-conv}) holds.
\begin{equation}
M_n(\bphi)-M_n(\bbeta)=
(\bphi-\bbeta)^TC_n(\bphi-\bbeta)
-\frac{2}{n} \varepsilon_n^T\bX_n(\bphi-\bbeta)
\label{eq:DecompM_n}
\end{equation}
where $\varepsilon_n=Y_n-\bX_n\bbeta$.
Since
$$\mathbb{E}\|\bX_n^T\varepsilon_n\|^2=\mathbb{E}\left[\operatorname{Tr}(\varepsilon_n^T \bX_n
  \bX_n^T\varepsilon_n)\right]=\operatorname{Tr}\left[\bX_n \bX_n^T\right]=O(n) \; ,$$ 
by Assumption~\ref{assump:Consistencelasso}-(\ref{it:CondConsistencelasso1}),
it comes $-\frac{2}{n} \varepsilon_n^T\bX_n(\bphi-\bbeta)=O_P(n^{-1/2})$.
And furthermore, by Assumption~\ref{assump:Consistencelasso}--(\ref{it:CondConsistencelasso1}) :
\[
	M_n(\bphi)-M_n(\bbeta)\to_P (\bphi-\bbeta)^T C (\bphi-\bbeta)=\Delta(\bphi) \; .
\]
Since $C$ is positive-definite, $\Delta$ is strictly convex and Assumption~\ref{assump:convex}-(\ref{it:convex-minimum}) holds.
By definition of $\hbbeta_n(\param)$, (\ref{eq:DefMestimaPen}) holds.
Finally, the condition $J_n(\bbeta)\to 0$ holds, as the penalty is defined by $J_n(\bbeta)=\lambda_n\|\bbeta\|_1$, with
$\|\cdot\|_1$ denoting the $\ell^1$ norm.
The uniform consistency on every compact set follows as an application of Theorem~\ref{theo:ConvexConsistence}.
\end{proof}

\begin{proof}[Proof of Theorem~\ref{theo:lassoCLT}]
We apply Theorem~\ref{theo:CLTStochDiff} with $\paramset$ a compact subset of $\Rset_+$.
By definition of $\hbbeta_n(\param)$, condition (\ref{eq:DefMestimaParamVitesseS}) holds.
We just obtained uniform consistency in Theorem~\ref{theo:lassoConsistence}.
Using~(\ref{eq:DecompM_n}), we have the decomposition~(\ref{eq:ContrasteDecompVitesse}) of $\Lambda_n(\bphi,\param)$, with
\begin{gather*}
	G_n(\bphi,\param) = -2n^{-1/2}U_n^T(\bphi-\bbeta)
+\param \lambda_n \left(\|\bphi\|_1-\|\bbeta\|_1\right)\;,\\
	H(\bphi,\param)   = (\bphi-\bbeta)^TC(\bphi-\bbeta)\text{ and }
	R_n(\bphi,\param) = \|\bphi-\bbeta\|^{-1}(\bphi-\bbeta)^T(C_n-C)(\bphi-\bbeta)\;,
      \end{gather*}
where $U_n=n^{-1/2}\bX_n^T \varepsilon_n$ and $\lambda_n=n^{-1/2}$, by Assumption~\ref{assump:CLTlasso}-(\ref{it:CondCLTlasso3}).

The sequence $\{U_n\}$ converges in distribution to $U\sim\mathcal{N}(0,\sigma^2C)$
by the Lindeberg-Feller theorem and Assumption~\ref{assump:CLTlasso}. 
We have, for all $\bphi\in\Rset^p$ and $\param\in\paramset$,
$n|G_n(\bphi,\param)|\leq \sqrt{n}U_n\|\bphi-\bbeta\|+\param \sqrt{n}|\|\bphi\|_1-\|\bbeta\|_1|
\leq \|\bphi-\bbeta\|(O_P(\sqrt{n})+c\sqrt{n})$, where $c$ is a positive constant.
%, using the norm equivalence on $\Rset^p$ $\|\bphi-\bbeta\|_1 \leq c \|\bphi-\bbeta\|$. 
Hence $G_n$ satisfies~(\ref{eq:Gncond}).

Conditions (\ref{eq:HcondS1}) and (\ref{eq:HcondS2}) on $H$ are immediately verified by taking $\Gamma(\param)=C$, for all $\param\in \paramset$ and using Assumption~\ref{assump:CLTlasso}-(\ref{it:CondCLTlasso1}).

Observe that $|R_n(\bphi,\param)|\leq \rho(C_n-C)~\|\bphi-\bbeta\|$
where $\rho(C_n-C)$ is the spectral radius of $\left(C_n-C\right)$.
Since $C_n \cp C$, $\rho(C_n-C)=o_{P}(1)$ and\\
$\sup\left\{R_n(\bphi,\param), \bphi\in\bPhi,\|\bphi-\bbeta\|\leq r_n\right\}=o_P(r_n)$.
Condition (\ref{eq:RncondS}) on $R_n$ follows.

As in (\ref{eq:pencontrastnormLinPartDef}), we define
$$
  \widehat{G}_n(\bphi,\param)=nG_n\left(\bbeta+n^{-1/2}\bphi,\param\right)
  =-2U_n^T\bphi+\param n^{1/2}\sum_{j=1}^p \left\{\left|\beta_j+n^{-1/2}\phi_j\right| -|\beta_j|\right\}	 \; .
$$
For any compact $K\subseteq \Rset^p$, let $f$ map $u\in\Rset^p$ to $f[u]\in\ell^\infty(K\times \paramset)$, defined by 
$f[u](\bphi,\param)=u^T\bphi$. The map $f$ is continuous and by the continuous mapping theorem, $f(U_n)$ converges to $f(U)$
in $\ell^\infty(K\times \paramset)$. From this and~(\ref{eq:convJnKF}) with $\gamma=1$, it follows that
$\widehat{G}_n$ converges to $G$ in $\ell^\infty(K\times \paramset)$, where
$$
G(\bphi,\param)= -2U^T\bphi+\param 
	\sum_{j=1}^p \left\{\phi_j \sgn\left(\beta_j\right) {\1}_{\{\beta_j \neq 0\}} + |\phi_j|
          {\1}_{\{\beta_j = 0\}}\right\}\;.
$$
By Assumption~\ref{assump:Consistencelasso}-(\ref{it:CondConsistencelasso1}) one has 
$\pencontrastlim(\bphi,\param)\geq c_1 \|\bphi\|^2 + c_2 \|\bphi\|$ 
for all $\bphi\in\Rset^p$ and $\param\in \paramset$, with $c_1>0$ and $c_2$ a finite random variable.
Since $\pencontrastlim(0,\param)=0$, we get
$0\geq \pencontrastlim(\hbbetadif(\param),\param)\geq c_1\|\hbbetadif(\param)\|^2+c_2\|\hbbetadif(\param)\|$
thus $\hbbetadif(\param)\leq -\frac{c_2}{c_1}$. 
Condition~(\ref{it:isolatedMinimumlimitCLT}) of Theorem~\ref{theo:argmaxParam} follows immediately and so does
Condition~(\ref{it:limitTight}) of Theorem~\ref{theo:argmaxParam}, observing that $\pencontrastlim(\bphi,\param)$
is continuous in $(\bphi,\param)$ and strictly convex in $\bphi$.
The convergence (\ref{eq:lassoCLTStochDiff}) follows as an application of Theorem~\ref{theo:CLTStochDiff}.
\end{proof}

\end{document}